\theoremstyle{plain}
\newtheorem{theorem}{Theorem}
\numberwithin{theorem}{section}
\newtheorem{lemma}[theorem]{Lemma}
\newtheorem{proposition}[theorem]{Proposition}
\theoremstyle{definition}
\newtheorem{remark}[theorem]{Remark}
\newcommand{\C}{{\mathbb C}}
\newcommand{\R}{{\mathbb R}}
\newcommand{\Z}{{\mathbb Z}}
\newcommand{\s}{{\mathbb S}}
\newcommand{\I}{{\mathbb I}}
\begin{document}
\title{Symmetric symplectic homotopy $K3$ surfaces}
\author{Weimin Chen and Slawomir Kwasik}
\subjclass[2000]{Primary 57R55, 57S15, Secondary 57R17}
\keywords{Exotic smooth structures, symplectic symmetries, $K3$ surfaces}
\thanks{The first author was partially supported by NSF grant DMS-0603932}
\thanks{The second author was partially supported by BOR award LEQSF(2008-2011)-RD-A-24}
\date{}

\begin{abstract}
A study on the relation between the smooth structure of a symplectic homotopy $K3$ surface 
and its symplectic symmetries is initiated. A measurement of exoticness of a symplectic homotopy 
$K3$ surface is introduced, and the influence of an effective action of a $K3$ group via
symplectic symmetries is investigated. It is shown that an effective action by various maximal
symplectic $K3$ groups forces the corresponding homotopy $K3$ surface to be minimally exotic 
with respect to our measure. (However, the standard $K3$ is the only known example of such 
minimally exotic homotopy $K3$ surfaces.)
The possible structure of a finite group of symplectic symmetries of a minimally exotic homotopy 
$K3$ surface is determined and future research directions are indicated.
\end{abstract}

\maketitle

\section{Introduction}

In the recent advances in topology and geometry of smooth $4$-manifolds
a very important role was played by one particular class of $4$-manifolds,
namely, the homotopy $K3$ surfaces. These manifolds have been used to
test the flexibility of smooth and symplectic structures in comparison
with the rigidity of holomorphic structures. To be more precise, let
$X$ be a homotopy $K3$ surface, namely, $X$ is a closed, oriented smooth
$4$-manifold homeomorphic (as an oriented manifold) to the standard $K3$
surface. If such a manifold admits an orientation-compatible symplectic
structure, then it is called a symplectic homotopy $K3$ surface. While
the knot surgery of Fintushel and Stern (cf. \cite{FS}) allows construction
of numerous examples of symplectic homotopy $K3$ surfaces, deep work
of Taubes \cite{T} gives very strong information about the
smooth structures on such manifolds. For example, one can easily show that
the set of Seiberg-Witten basic classes of $X$ spans an isotropic sublattice
$L_X$ of $H^2(X;\Z)$ (with respect to the cup product), so that its rank,
denoted by $r_X$, must range from $0$ to $3$ (cf. Proposition 4.1).
The rank $r_X$ of the lattice $L_X$ of the Seiberg-Witten basic classes
gives a rough measurement of the exoticness of the smooth structure of $X$,
with $r_X=0$ being the minimally exotic and with $r_X=3$ being the maximally exotic.

There are various known characterizations of the minimally exotic (i.e. $r_X=0$)
symplectic homotopy $K3$ surfaces $X$, which are all characteristics of the
standard $K3$, namely:

\begin{itemize}
\item $X$ has a trivial canonical class, i.e., $c_1(K_X)=0$, cf. \cite{T}.
\item $X$ has a unique Seiberg-Witten basic class, cf. \cite{MS, T}.
\item $X$ has the same Seiberg-Witten invariant of the standard $K3$, cf.
          \cite{T}.
\item $X$ is a simply-connected, minimal symplectic $4$-manifold with zero
       Kodaira dimension, cf. \cite{Li}.
\end{itemize}
Moreover, the standard $K3$ surface is the only known example of such a $4$-manifold,
and it has been a challenging problem as whether there is an exotic smooth structure with
$r_X=0$. 

\vspace{2mm}

In \cite{CK2} the authors studied the possible effect of a change of
a smooth structure on the symmetry group of a closed, oriented
$4$-manifold. It was shown that for an infinite family of maximally exotic
(i.e. $r_X=3$) $K3$ surfaces, there are very significant
constraints on the smooth symmetry groups of the manifolds.  
The current paper took a rather opposite viewpoint as one of its purposes 
is to investigate the implications of a (symplectic) group action for
the smooth structure of a $4$-manifold.

The interaction between smooth structures and symmetry groups of a
manifold is one of the basic questions in the theory of differentiable
transformation groups. In particular, the following classical theorem
of differential geometry gives a characterization of the standard sphere
$\s^n$ among all the homotopy $n$-spheres as having the largest degree of
symmetry (cf. \cite{Hsiang}).

\vspace{2mm}

{\bf Theorem} (A Characterization of $\s^n$).
{\em Let $M^n$ be a closed, simply connected
manifold of dimension $n$, and let $G$ be a compact Lie group which
acts smoothly and effectively on $M^n$. Then $\dim G\leq n(n+1)/2$,
with equality if and only if $M^n$ is diffeomorphic to $\s^n$.}

\vspace{2mm}

If $X$ is a homotopy $K3$ surface then it is well known that a compact
Lie group acting smoothly on $X$ must be finite (cf. \cite{AH}).
A finite group $G$ is called a $K3$ group (resp. symplectic $K3$ group)
if $G$ can be realized as a subgroup of the automorphism group (resp.
symplectic automorphism group) of a  $K3$ surface. Finite automorphism
groups of $K3$ surfaces were first systematically studied by Nikulin in
\cite{N}; in particular, he completely classified finite abelian groups
of symplectic automorphisms. Subsequently, Mukai \cite{Mu} determined all
the symplectic $K3$ groups (see also \cite{Kon, Xiao}). The following $11$ groups 
are the maximal symplectic $K3$ groups:
$$
L_2(7), A_6, S_5, M_{20}, F_{384}, A_{4,4}, T_{192}, H_{192}, N_{72},
M_9, T_{48}.
$$

Motivated by the above characterization of the standard $\s^n$ we were led to the following:

\vspace{2mm}

{\bf Problem} \hspace{2mm} {\it Let $X$ be a homotopy $K3$ surface supporting
an effective action of a ``large'' $K3$ group via symplectic symmetries.
What can be said about the smooth structure on $X$?}

\vspace{2mm}

Viewing the above maximal symplectic $K3$ groups as ``large'', our
solution to this problem is contained in the following:

\vspace{3mm}

\noindent{\bf Theorem 1.0.} {\em Let $G$ be one of the following maximal
symplectic $K3$ groups: 
$$
L_2(7), A_6, M_{20}, A_{4,4}, T_{192}, T_{48},
$$ 
and let $X$ be a symplectic homotopy $K3$ surface. If $X$ admits an
effective $G$-action via symplectic symmetries, then $X$ must
be minimally exotic, i.e., $r_X=0$.
}

\vspace{3mm}

{\bf Remarks} \hspace{3mm}
It is possible to extend Theorem 1.0 to other $K3$ groups, or more generally, to give 
an upper bound on the exoticness $r_X$ when $X$ admits a ``relatively large'' symplectic 
symmetry group. However, we shall not pursue these extensions here as the detailed analysis
depends very much on the structure of each individual group involved.

\vspace{3mm}

In fact, behind the proof of Theorem 1.0 a general method was devised in this paper
which allows one to measure the effect of a symplectic finite group action on a homotopy
$K3$ surface $X$ in terms of its exoticness $r_X$. 
The basic idea of our method may be summarized as follows. Let a finite
group $G$ act on a homotopy $K3$ surface $X$ via symplectic symmetries.
Using the techniques developed in our previous work \cite{CK1} and exploiting
various features of the structure of $G$, one first determines the possible
fixed point set of an arbitrary element $g\in G$, from which the trace $tr(g)$ of
$g$ on $H^\ast(X;\Z)$ can be computed using the Lefschetz fixed point theorem.
This leads to a calculation of
$$
\dim (H^\ast(X;\R))^G=\frac{1}{|G|}\sum_{g\in G}tr(g).
$$
On the other hand, there is an induced action of $G$ on the lattice $L_X$ of
the Seiberg-Witten basic classes. The following basic inequality 
$$
\dim (L_X\otimes_\Z\R)^G\leq \min(b^{+}_2(X/G),b^{-}_2(X/G)),
$$
which follows from the fact that $L_X$ is isotropic (cf. Proposition 4.1), 
plus the identity $\dim (H^\ast(X;\R))^G=2+b^{+}_2(X/G)+b^{-}_2(X/G)$
allows one to obtain information about $\dim (L_X\otimes_\Z\R)^G$
and $r_X=\text{rank }L_X$.

For an illustration we consider the case where $G$ is a nonabelian simple
group. It is easily seen that in this case $b^{+}_2(X/G)=3$ and
$r_X=\dim (L_X\otimes_\Z\R)^G$. The above basic inequality then becomes
$$
r_X\leq \min(3, \dim (H^\ast(X;\R))^G-5).
$$
There are three nonabelian simple $K3$ groups: $L_2(7), A_5$ and $A_6$.
For the case where $G=L_2(7)$ or $A_6$, we show in Section 2 that
$\dim (H^\ast(X;\R))^G=5$ (which is the same as that of a holomorphic
$G$-action on a $K3$ surface), so that $r_X=0$ as asserted in Theorem 1.0.
For $G=A_5$, the fixed-point analysis only gives $\dim (H^\ast(X;\R))^G\leq 8$, cf. Lemma 2.5.
(Note that even for a holomorphic $A_5$-action, one only gets 
$\dim (H^\ast(X;\R))^G=6$.) Thus in the case of $G=A_5$, our method only gives $r_X\leq 3$, 
which does not yield any restriction on the exoticness $r_X$.

\vspace{2mm}

Our Theorem 1.0 naturally gives rise to the following question.

\vspace{2mm}

{\it What can be said about a finite group $G$ which can act effectively
on a minimally exotic symplectic homotopy $K3$ surface via symplectic
symmetries?
}

\vspace{2mm}

In the following theorem, we show that the symmetries of a minimally exotic
symplectic homotopy $K3$ surface look very much
like holomorphic automorphisms of the standard $K3$ surface; in particular,
the symmetry groups are more or less $K3$ groups.

\begin{theorem}\label{th:thm1.1}
Let $X$ be a minimally exotic symplectic homotopy $K3$ surface (i.e. $r_X=0$)
and let $G$ be a finite group acting effectively on $X$ via symplectic symmetries. Then
there exists a short exact sequence of finite groups
$$
1\rightarrow G_0\rightarrow G\rightarrow G^0\rightarrow 1,
$$
where $G^0$ is cyclic and $G_0$ is a symplectic $K3$ group, such that
$G_0$ is characterized as the maximal subgroup of $G$ with the property
$b_2^{+}(X/G_0)=3$. Moreover, the induced action of $G_0$ on $X$ has the
same fixed point set structure as does a symplectic holomorphic action on the standard
$K3$ by $G_0$.
\end{theorem}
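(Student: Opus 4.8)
The plan is to organize everything around the induced linear action of $G$ on the self-dual part of $H^2$. First I would average to produce a $G$-invariant symplectic form $\omega$ (finite-order symplectomorphisms can be normalized to fix a single $\omega$) together with a $G$-invariant compatible almost complex structure $J$; such a $J$ exists because $G$ acts on the contractible space of $\omega$-compatible almost complex structures and a finite group acting on a contractible space of nonpositive-curvature type has a fixed point. The associated metric $g_J$ is then $G$-invariant, so the $3$-dimensional space $H^2_+$ of self-dual harmonic $2$-forms is a $G$-invariant positive-definite subspace of $H^2(X;\R)$ containing the fixed vector $[\omega]$. This yields $\rho\colon G\to O(H^2_+)\cong O(3)$. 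Since every $g\in G$ is an orientation-preserving diffeomorphism, it preserves the orientation of a maximal positive-definite subspace (a standard fact for $K3$-type manifolds, where the image of $\mathrm{Diff}^+$ in the isometry group of the lattice is exactly the orientation-of-$H^2_+$-preserving subgroup), so $\rho(g)\in SO(3)$; and since $\rho(g)$ fixes $[\omega]\ne 0$, it lies in the stabilizer $SO(2)$ acting on the plane $P=[\omega]^{\perp}\cap H^2_+$.

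Next I would set $G_0:=\ker\rho$ and $G^0:=\mathrm{im}\,\rho\subseteq SO(2)$. As a finite subgroup of $SO(2)$, the group $G^0$ is cyclic, so $\rho$ produces the short exact sequence $1\to G_0\to G\to G^0\to 1$ with $G^0$ cyclic, and $G_0$ is automatically normal. To identify $G_0$ with the maximal subgroup satisfying $b_2^+(X/G_0)=3$, I would use $b_2^+(X/H)=\dim (H^2_+)^H$ for any subgroup $H\le G$ (the invariant metric splits $(H^2)^H$ into its positive and negative parts, compatibly with the identity $\dim(H^\ast(X;\R))^G=2+b_2^+(X/G)+b_2^-(X/G)$). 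If $\rho(H)$ is nontrivial it contains a nontrivial rotation of $P$, which has no nonzero fixed vector in $P$, so $(H^2_+)^H=\langle[\omega]\rangle$ and $b_2^+(X/H)=1$; if $\rho(H)$ is trivial then $(H^2_+)^H=H^2_+$ and $b_2^+(X/H)=3$. Hence $b_2^+(X/H)=3$ if and only if $H\le G_0$, so $G_0$ is the unique maximal subgroup with this property.

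The crux is showing $G_0$ is a symplectic $K3$ group whose fixed-point set structure matches a holomorphic model. Since $G_0$ acts trivially on $H^2_+$, for each nontrivial $g\in G_0$ the $(+1)$-eigenspace of $g^\ast$ on $H^2$ contains the positive $3$-space $H^2_+$; combining this with the hypothesis that $X$ is standard (so $c_1(K_X)=0$ and the Seiberg-Witten/Taubes data are those of the honest $K3$) I would run the Lefschetz fixed-point analysis for symplectic symmetries to show that $g$ has only isolated fixed points whose local representations lie in $SL(2,\C)$ — exactly the local models of symplectic $K3$ automorphisms — and that the fixed-point counts and the traces $\mathrm{tr}(g)$ on $H^\ast(X;\Z)$ agree element-by-element with Nikulin's tables for symplectic automorphisms of order $\le 8$. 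It follows that the invariant lattice $H^2(X;\Z)^{G_0}$ has signature $(3,\ast)$ and that its orthogonal complement, the coinvariant lattice, is negative definite. By Nikulin's lattice-theoretic criterion together with the Mukai–Kondo classification, a finite group admitting such an action on the $K3$ lattice is a symplectic $K3$ group and its lattice action is equivalent to that of a genuine holomorphic symplectic $G_0$-action on the standard $K3$; matching the fixed-point data on each element and on the intersections of fixed loci then yields the asserted structure.

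The hard part is this last paragraph: passing from ``each element acts like a symplectic automorphism'' to ``the abstract group $G_0$ is a symplectic $K3$ group with holomorphically-modeled fixed points.'' The danger is spurious invariants, so one must show that the smooth $G_0$-action forces the coinvariant lattice to be exactly negative definite with no fixed cohomology beyond what the fixed-point set dictates, so that Nikulin's hypotheses are genuinely satisfied rather than merely consistent. This rigidity comes precisely from $X$ being standard: $r_X=0$ makes the lattice $L_X$ of basic classes trivial and forces $c_1(K_X)=0$, which removes the flexibility present for exotic $X$ (exactly the flexibility that, as the introduction notes, defeats the analogous argument for $A_5$). Establishing the collective — not merely elementwise — fixed-point compatibility and then invoking the realization half of the Mukai–Kondo classification is where the real work lies.
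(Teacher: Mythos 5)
Your formal skeleton---the homomorphism to $SO(2)$ coming from the action on $H^2_{+}$ fixing $[\omega]$, the resulting exact sequence with cyclic $G^0$, and the identification of the kernel with the maximal subgroup having $b_2^{+}(X/H)=3$---matches the paper's Lemma 2.1 in spirit, though your justification that no reflections occur (quoting a Donaldson-type fact about $\mathrm{Diff}^{+}$ of the \emph{standard} $K3$) does not apply verbatim to a homotopy $K3$; the paper instead derives oddness of $b_2^{+}(X/G)$ from the zero-dimensionality of the Seiberg--Witten moduli space for the canonical $\mathrm{Spin}^{\C}$ structure on the symplectic orbifold $X/G$. That is fixable. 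The genuine gap is in your third paragraph, and it is twofold. First, you assert that ``running the Lefschetz fixed-point analysis'' shows every nontrivial $g\in G_0$ is pseudofree with local representations in $SL_2(\C)$. Lefschetz and $G$-signature computations cannot deliver this: they constrain fixed-point counts given a fixed-point structure, but cannot by themselves exclude $2$-dimensional fixed components or non-$SL_2(\C)$ local representations (indeed, by the paper's Lemma 2.2(1), involutions in $G\setminus G_0$ genuinely can have $2$-dimensional fixed sets, and even for involutions the dichotomy is detected by Bryan's spin-theoretic results, not by Lefschetz numerics). The paper's actual mechanism (Proposition 3.1) is Taubes' theory: since $r_X=0$ forces $c_1(K_X)=0$, the large-$r$ limit of the perturbed Seiberg--Witten equations produces a parallel, nowhere-vanishing section $\alpha_0$ of $K_X$, unique up to gauge; equivariance then canonically defines $\phi\colon G\rightarrow \s^1$, with $G_0=\ker\phi$, and every element of $G_0$ preserves the trivialization $\alpha_0$, which is exactly what forces isolated fixed points with $SL_2(\C)$ local models. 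Matching your cohomologically defined kernel with this one also requires the equivariant-SW nonvanishing argument ($b_2^{+}(X/g)=3$ implies $\phi(g)=1$), which your outline does not supply.

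Second, even granting elementwise holomorphic-like fixed-point data, your plan to conclude via ``Nikulin's lattice criterion plus the Mukai--Kondo classification'' is not carried out, and you concede as much. The obstruction you flag is real: those hypotheses include lattice conditions (e.g., that the coinvariant lattice is negative definite and contains no $(-2)$-roots) whose holomorphic proofs use Riemann--Roch and effectivity of divisors, with no direct smooth or symplectic analogue. The paper closes this differently: it proves an equivariant symplectic resolution lemma (Lemma 3.2, via the McCarthy--Wolfson construction done equivariantly, together with Rohlin/Furuta and Tian-Jun Li's results on symplectic $4$-manifolds of Kodaira dimension zero) showing that $X/H$ resolves to another ``standard'' symplectic homotopy $K3$ for each normal $H$ in a subgroup $K\subset G_0$, and it replaces the single complex-geometric ingredient of Xiao's classification (his Lemma 2) by the purely topological computation $L/L^{\prime}\cong G/[G,G]$ (Lemma 3.4); with these in hand, Xiao's combinatorial machinery applies verbatim and yields both that $G_0$ is a symplectic $K3$ group and the fixed-point-structure statement. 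Without either the Taubes trivialization or the resolution-plus-Xiao step, your outline omits the two pieces of real content in the theorem.
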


Motivated by the above result we turn our attention to the problem of constructing
symplectic finite group actions on {\it exotic} $K3$ surfaces.  It seems that this line 
of research will require a developement of new techniques. Our next result could 
be viewed as a first, preliminary, step in this direction.

First of all, it is clear that the Fintushel-Stern knot surgery \cite{FS} can be suitably
adapted for this purpose. More precisely, suppose a finite group $G$ acts on the 
standard $K3$ surface preserving an elliptic fibration. Then under a certain condition
(cf. Reamrk 4.3), one can perform knot surgery equivariantly to produce $G$-actions
on exotic $K3$ surfaces. For example, every cyclic $K3$ group of prime order can act 
holomorphically on an elliptic $K3$ surface (cf. \cite{Shi, MO}), and by a knot surgery 
one can easily show that such a group can act on an exotic $K3$ surface via symplectic 
symmetries. Concerning noncyclic $K3$ groups, the following theorem perhaps gives the
most dramatic example of such a construction.

\begin{theorem}\label{th:thm1.2}
Let $G\equiv (\Z_2)^3$.
There exists an infinite family of distinct maximally exotic (i.e. $r_X=3$) symplectic 
homotopy $K3$ surfaces, such that each member
of the exotic $K3$'s admits an effective $G$-action via symplectic symmetries.
Moreover, the $G$-action is pseudofree and induces a trivial action on the
lattice $L_X$ of the Seiberg-Witten basic classes.
\end{theorem}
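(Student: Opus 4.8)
The plan is to realize every member of the family by $G$-equivariant Fintushel--Stern knot surgery on the Kummer K3 surface, using the abelian model to produce both the $(\Z_2)^3$-symmetry and three elliptic fibrations whose generic fibers furnish the surgery tori. I begin with $T^4 = \R^4/\Z^4$ and its Kummer surface $X_0 = \widetilde{T^4/\pm 1}$; the group of half-period translations descends to a symplectic action, and I take $G = (\Z_2)^3$ to be generated by the half-translations $t_1,t_2,t_3$ along the first three coordinate directions. Since a nonzero half-translation satisfies $x+a\equiv -x$ only at finitely many non-$2$-torsion points and fixes none of the $16$ nodes, each $t_a$ has isolated fixed points and permutes the exceptional spheres freely; hence this $G$-action is symplectic and pseudofree, and it acts trivially on the image of $H_2(T^4)$ in $H_2(X_0)$.

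Next I select the surgery tori. For the rational $2$-planes $P_i = e_1\wedge f_i$ with $f_1 = e_2+e_4$, $f_2 = e_3+e_4$, $f_3 = e_4$, the classes $\alpha_i = [e_1\wedge f_i]$ are primitive, pairwise orthogonal and isotropic (each pair shares the factor $e_1$) and independent in $H_2$. A generic translate of $P_i$ descends to an embedded symplectic torus $\overline T_i\subset X_0$ disjoint from the exceptional divisors, and since $\alpha_i$ is primitive isotropic it is the fiber class of an elliptic fibration $\pi_i\colon X_0\to\P^1$, so $\overline T_i$ is a smooth fiber whose meridian is killed by a vanishing cycle. The point of the choice of $f_i$ is that only $t_1$ preserves each $P_i$, so the stabilizer of $\overline T_i$ in $G$ is exactly $\langle t_1\rangle$, acting freely by translation along the $e_1$-factor, while $t_2,t_3$ translate $\overline T_i$ to parallel disjoint copies; thus each $G$-orbit consists of disjoint fibers and contains no fixed point of $G$.

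I then perform $G$-equivariant knot surgery on these orbits. Writing $\nu\overline T_i = (S^1_{e_1})\times(S^1_{f_i}\times D^2)$, I replace the solid-torus factor by the complement $M_{K_i}$ of a fibered knot $K_i$, simultaneously on every torus in the orbit and with the same knot on each, so that $t_2,t_3$ permute the surgery pieces and the stabilizer $\langle t_1\rangle$ acts by the free translation on the untouched $S^1_{e_1}$-factor times the identity on $M_{K_i}$. No symmetry of the knot is needed, precisely because the stabilizer translates only along the surgered-away $e_1$-direction; the new pieces therefore contain no fixed points and $G$ continues to act pseudofreely. Choosing the $K_i$ fibered makes each surgery symplectic in the sense of \cite{FS}, and carrying it out inside a $G$-invariant tubular neighborhood yields a $G$-invariant symplectic form; a van Kampen argument using the vanishing cycles of the $\pi_i$ shows the result $X$ stays simply connected and homeomorphic to $K3$.

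Finally I read off the invariants. By the Fintushel--Stern product formula the Seiberg--Witten invariant of $X$ is $\prod_i \Delta_{K_i}(\exp(2\,\mathrm{PD}\,\alpha_i))^{m_i}$, where $m_i$ is the orbit size, so the basic classes are exactly the combinations $\sum_i 2k_i\,\mathrm{PD}\,\alpha_i$ occurring with nonzero coefficient. Taking each $K_i$ with $\deg\Delta_{K_i}\geq 1$ makes all three directions appear, so $L_X = \langle \mathrm{PD}\,\alpha_1,\mathrm{PD}\,\alpha_2,\mathrm{PD}\,\alpha_3\rangle$ has rank $r_X=3$; since $G$ fixes each $\alpha_i$, it acts trivially on $L_X$. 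Varying $K_1$ over an infinite family of fibered knots with distinct Alexander polynomials (with $K_2,K_3$ fixed) yields distinct Seiberg--Witten invariants and hence infinitely many non-diffeomorphic such surfaces. The main obstacle is the third paragraph: arranging the surgery to be at once symplectic, strictly $G$-equivariant, and fixed-point-free on the new pieces, and confirming that both the $G$-invariant symplectic form and simple connectivity survive all three surgeries simultaneously (in particular that the three orbits can be made pairwise disjoint); the rank and triviality computations are then a direct application of the product formula together with the fact that half-translations act trivially on the abelian part of $H_2$.
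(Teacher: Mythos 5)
Your proposal is correct and follows essentially the same route as the paper: a $(\Z_2)^3$-action on the Kummer surface by half-period translations (pseudofree, with $8$ isolated fixed points per involution), three $G$-orbits of four disjoint symplectic tori whose stabilizers are cyclic and act by free translation along a circle factor, equivariant Fintushel--Stern surgery with the stabilizer acting trivially on the knot-complement factor (so no knot symmetry is needed, the same point as the paper's Remark 4.3), and the product formula giving $r_X=3$, trivial $G$-action on $L_X$, and an infinite family via varying Alexander polynomials. One small repair: with the standard form $dx_1\wedge dx_2+dx_3\wedge dx_4$ your planes $e_1\wedge(e_3+e_4)$ and $e_1\wedge e_4$ are Lagrangian, so you must fix a different constant (hence $\rho$- and $G$-invariant) symplectic form, e.g. $dx_1\wedge dx_4+dx_2\wedge dx_3$, for which all three planes are symplectic.
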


The limitation of equivariant knot surgery is that the group $G$ has to be a $K3$ group,
and that it is difficult to construct group actions on homotopy $K3$ surfaces with a large 
exoticness (e.g., $r_X>1$). In particular, the following questions seem to require techniques 
which go beyond the equivariant knot surgery:

\vspace{3mm}

{\bf Questions} {\it

(1) Are there any finite groups other than a $K3$ group which can act 
symplectically on a homotopy $K3$ surface?

(2) Are there any finite groups other than $(\Z_2)^3$ (or a subgroup of it)
which can act symplectically on a homotopy $K3$ surface $X$ with $r_X=3$?

(3) Can $(\Z_2)^4$ act symplectically on an exotic $K3$ surface (i.e., with $r_X>0$)?
}
\vspace{3mm}

We would like to point out that an earlier version of this paper was circulated under the title: {\em Symmetric Homotopy $K3$ Surfaces}.

The organization of the rest of the paper is as follows. The proofs of
Theorem 1.0 and Theorem 1.1 are given in Sections 2 and 3 respectively.
In Section 4 we show that the lattice $L_X$ of Seiberg-Witten basic classes
is isotropic and $r_X\leq 3$. The proof of Theorem 1.2 is also given in
Section 4.

\section{Proof of Theorem 1.0}

Let $(X,\omega)$ be a symplectic homotopy $K3$ surface, and let $G$ be a
finite group which acts on $X$ smoothly and effectively, preserving the symplectic structure 
$\omega$. We pick an arbitrary $\omega$-compatible, $G$-equivariant almost complex
structure $J$ on $X$, and we denote by $g_J$ the associated Riemannian metric,
i.e., $g_J(\cdot,\cdot)\equiv \omega(\cdot,J\cdot)$, which is also $G$-equivariant.

We derive some preliminary information about the $G$-action first.

\begin{lemma}
Let $G_0$ be the maximal subgroup of $G$ such that $b_2^{+}(X/G_0)=3$.
Then $G/G_0$ is cyclic. Moreover, the commutator $[G,G]$ is
contained in $G_0$.
\end{lemma}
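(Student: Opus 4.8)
The plan is to realize $G_0$ as the kernel of a linear representation and then analyze that representation directly. Since the metric $g_J$ is $G$-invariant, the Hodge decomposition is $G$-equivariant, so for every subgroup $H\le G$ one has $b_2^{+}(X/H)=\dim(\mathcal{H}^{+})^{H}$, where $\mathcal{H}^{+}$ denotes the $3$-dimensional space of $g_J$-self-dual harmonic $2$-forms. The cup product is positive definite and $G$-invariant on $\mathcal{H}^{+}$, so the action defines a homomorphism $\rho\colon G\to O(\mathcal{H}^{+})\cong O(3)$. By definition $G_0$ is the maximal subgroup acting trivially on $\mathcal{H}^{+}$, i.e. $G_0=\ker\rho$; hence $G_0\trianglelefteq G$ automatically, and $G/G_0\cong\rho(G)$ is a finite subgroup of $O(3)$. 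Because $g^{*}\omega=\omega$, the line $\mathbb{R}\omega\subset\mathcal{H}^{+}$ is fixed, so $\rho(G)$ lies in the stabilizer $O(W)\cong O(2)$ of $\mathbb{R}\omega$, where $W=\omega^{\perp}\cap\mathcal{H}^{+}$.

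A finite subgroup of $O(2)$ is either cyclic or dihedral, so it remains only to rule out reflections, i.e. to show $\rho(G)\subseteq SO(W)$. The key reformulation is that $\rho(g)$ acts on $W$ as a reflection if and only if $\dim(\mathcal{H}^{+})^{\langle g\rangle}=2$, that is, $b_2^{+}(X/\langle g\rangle)=2$; when $\rho(g)|_{W}$ is a rotation the invariant dimension is $1$ or $3$. Thus cyclicity of $G/G_0$ is equivalent to excluding the value $b_2^{+}(X/\langle g\rangle)=2$ for every $g\in G$.

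To exclude reflections I would exploit the $G$-equivariant almost complex structure $J$. It makes the canonical bundle $K_X=\Lambda^{2,0}T^{*}X$ a $G$-equivariant complex line bundle and furnishes the $G$-invariant splitting $\Lambda^{+}=\mathbb{R}\omega\oplus\mathrm{Re}(K_X)$, with $G$ acting complex-linearly on $K_X$ and hence preserving the fiberwise orientation of $\mathrm{Re}(K_X)$; concretely, at a fixed point $p$ one has $dg_p\in U(2)$ acting on $\Lambda^{2,0}_p$ through $\det\colon U(2)\to U(1)$, i.e. as a rotation and never as a reflection. I would promote this fiberwise orientation to an orientation of the space $W$ via the $G$-invariant alternating pairing $B(\alpha,\beta)=\int_X\langle\pi\alpha,\mathbf{i}\,\pi\beta\rangle_{g_J}\,d\mathrm{vol}$, where $\pi\colon\Lambda^{+}\to\mathrm{Re}(K_X)$ is the $G$-equivariant orthogonal projection and $\mathbf{i}$ is the complex structure on $\mathrm{Re}(K_X)$. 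If $B$ is nondegenerate on the $2$-dimensional space $W$, it determines a $G$-invariant orientation, forcing $\rho(G)\subseteq SO(W)$.

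The main obstacle is precisely the nondegeneracy of $B$, equivalently the injectivity of $\pi|_{W}$. In the Kähler case this is immediate, since harmonic self-dual forms orthogonal to the Kähler class are pointwise sections of $\mathrm{Re}(K_X)$ (the real and imaginary parts of the holomorphic $2$-form), so $\pi|_{W}$ is the identity; for a merely symplectic homotopy $K3$ surface one must instead control the $\mathbb{R}\omega$-component of a harmonic form in $W$ and rule out the degenerate possibility that its projections to $\mathrm{Re}(K_X)$ become parallel. Once $\rho(G)\subseteq SO(2)$ is established, $\rho(G)$ is a finite subgroup of $SO(2)$, hence cyclic, so $G/G_0$ is cyclic. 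Finally, since a cyclic quotient is abelian and $[G,G]$ is the smallest normal subgroup with abelian quotient, the containment $[G,G]\subseteq G_0$ follows at once.
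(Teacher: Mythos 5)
Your setup coincides with the paper's up to the crucial step: both arguments consider the $G$-action on the $3$-dimensional space $H^{+}$ of $g_J$-self-dual harmonic $2$-forms, use the invariance of $\omega$ to get an action on the $2$-plane $W=\langle\omega\rangle^{\perp}$, identify $G_0$ with the kernel, and correctly reduce the lemma to excluding reflections on $W$, i.e.\ to excluding the value $b_2^{+}(X/\langle g\rangle)=2$. The genuine gap is in your mechanism for this exclusion. You propose to orient $W$ via the alternating pairing $B(\alpha,\beta)=\int_X\langle\pi\alpha,\mathbf{i}\,\pi\beta\rangle\,d\mathrm{vol}$, but, as you concede, its nondegeneracy is unproven, and this is not a removable technicality: on a $2$-dimensional space an alternating form is either nondegenerate or identically zero, and $B\equiv 0$ happens precisely when the $\mathrm{Re}(K_X)$-components of a basis of $W$ are pointwise real-proportional off their common zero set. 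The injectivity of $\pi|_{W}$, which does hold (if $\pi\alpha=0$ then $\alpha=f\omega$ pointwise, and $d(f\omega)=0$ forces $df\wedge\omega=0$, hence $f$ constant, hence $f=0$ by $L^2$-orthogonality to $\omega$), does not rule out this pointwise alignment. For a compatible but non-K\"{a}hler metric $g_J$ there is no pointwise relation between harmonic self-dual forms and $\mathrm{Re}(K_X)$, and no soft tool (unique continuation, genericity) is in sight to exclude the degenerate case; so the orientation on $W$ is never actually constructed, and the proof does not close.

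The paper fills exactly this hole with global gauge-theoretic input rather than Hodge theory: the quotient $X/G$ (and, for the reflection criterion, $X/\langle g\rangle$ for each $g$) is a symplectic $4$-orbifold, and by the orbifold version of Taubes's theorem the Seiberg-Witten moduli space of its canonical $Spin^{\C}$ structure is zero-dimensional, giving $2\cdot(\text{index of Dirac operator})+(b_1-1-b_2^{+})=0$ for the quotient; since $b_1=0$, its $b_2^{+}$ is odd, hence $1$ or $3$ and never $2$ --- precisely the parity statement your reduction requires. This yields the exact sequence $1\rightarrow G_0\rightarrow G\rightarrow \s^1$ and cyclicity of $G/G_0$ (your final deductions, including $[G,G]\subseteq G_0$, are fine). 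If you wish to rescue your argument, some such global input must replace the nondegeneracy of $B$; the K\"{a}hler case you invoke, where $W$ is spanned pointwise by the real and imaginary parts of the holomorphic $2$-form, has no purely harmonic-theoretic analogue in the symplectic setting.
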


\begin{proof}
Let $H^{+}$ be the space of $g_J$-self-dual harmonic $2$-forms on $X$.
Since the Riemannian metric $g_J$ is $G$-equivariant, we see that $H^{+}$
is invariant under the action of $G$. Moreover, since $\omega\in H^{+}$
and $G$ fixes $\omega$, there is an induced action of $G$ on the
orthogonal complement $\langle\omega\rangle^{\perp}$ of $\omega$ in
$H^{+}$. Note that $\dim H^{+}=3$, so that
$\dim\;\langle\omega\rangle^{\perp}=2$. We claim that the action of $G$ on
$\langle\omega\rangle^{\perp}$ is orientation-preserving (i.e. there are
no reflections). 

To see this, suppose there is a $g\in G$ such that the action of $g$ on $\langle\omega\rangle^{\perp}$
is not orientation-preserving. This happens exactly when $g$ fixes a $1$-dimensional subspace, and
it follows easily that in this case $b_2^{+}(X/\langle g\rangle)=2$. On the other hand, 
$b_2^{+}(X/\langle g\rangle)$ must be odd.  This is because for the symplectic $4$-orbifold
$X/\langle g\rangle$, the dimension of the Seiberg-Witten moduli space associated to the canonical 
$Spin^\C$ structure equals $0$ (cf. \cite{C}, Appendix A). This gives rise to the
equation
$$
2\cdot \text{index of Dirac operator} + (b_1(X/\langle g\rangle)-1-b_2^{+}(X/\langle g\rangle))=0.
$$
It follows easily that  $b_2^{+}(X/\langle g\rangle)$ is odd because $b_1(X/\langle g \rangle)=0$.

With the preceding understood, we obtain an exact sequence of groups
$$
1\rightarrow G_0\rightarrow G\rightarrow \s^1,
$$
where the last homomorphism $G\rightarrow \s^1$ is induced from the action of
$G$ on $\langle\omega\rangle^{\perp}$. The lemma follows immediately from this.

\end{proof}

For a symplectic $K3$ group $G$, the commutator $[G,G]$ and the quotient group 
(i.e. the abelianization) $G/[G,G]$ is determined in \cite{Xiao}. For the purpose of later 
discussions, the list of $G$ where $G$ is maximal is reproduced below.

\begin{itemize}
\item $G=L_2(7)$: $[G,G]=G$ and $G/[G,G]=0$.
\item $G=A_6$:  $[G,G]=G$ and $G/[G,G]=0$.
\item $G=S_5$: $[G,G]=A_5$ and $G/[G,G]=\Z_2$.
\item $G=M_{20}=2^4 A_5$: $[G,G]=G$ and $G/[G,G]=0$.
\item $G=F_{384}=4^2 S_4$: $[G,G]=4^2 A_4$ and $G/[G,G]=\Z_2$.
\item $G=A_{4,4}=2^4 A_{3,3}$: $[G,G]=A_4^2$ and $G/[G,G]=\Z_2$.
\item $G=T_{192}=(Q_8\ast Q_8)\times_\phi S_3$:
$[G,G]=(Q_8\ast Q_8)\times_\phi \Z_3$ and $G/[G,G]=\Z_2$.
\item $G=H_{192}=2^4 D_{12}$: $[G,G]=2^4\Z_3$ and $G/[G,G]=(\Z_2)^2$.
\item $G=N_{72}=3^2 D_8$: $[G,G]=A_{3,3}$ and $G/[G,G]=(\Z_2)^2$.
\item $G=M_9=3^2 Q_8$:  $[G,G]=A_{3,3}$ and $G/[G,G]=(\Z_2)^2$.
\item $G=T_{48}=Q_8\times_\phi S_3$: $[G,G]=T_{24}=Q_8\times_\phi \Z_3$ and
$G/[G,G]=\Z_2$.
\end{itemize}

The crucial step in the proof of Theorem 1.0 is to determine the
possible fixed point set of an arbitrary element of $G$. This is done
by combining the analysis in our previous work \cite{CK1} with various
$G$-index theorems, and by exploiting various specific features of
the group $G$.

Here is the main technical input from \cite{CK1} (cf. Lemma 3.1 in \cite{CK1}). 
Since $b_2^{+}(X/G_0)=3\geq 2$, the canonical class $c_1(K_X)$ is represented by
$\sum_i n_i C_i$, where $n_i\geq 1$ and $\{C_i\}$ is a finite set of $J$-holomorphic 
curves such that (i) $\cup_i C_i$ is invariant under
the action of $G_0$, (ii) if $p\in X\setminus (\cup_i C_i)$
is fixed by an element $g\in G_0$, then the local representation of $g$
at $p$ must be contained in $SL_2(\C)$. (In particular, $p$ must be an
isolated fixed point of $g$, and all the $2$-dimensional components of the
fixed point set $\text{Fix}(g)$ are contained in $\cup_i C_i$.) 

The fixed point set of an element of order $2$ or $4$ is determined in the following

\begin{lemma}
(1) Let $g\in G$ be an involution. If $g\in G_0$, then $\text{Fix}(g)$
consists of $8$ isolated fixed points. If $g\in G\setminus G_0$, then
$\text{Fix}(g)$ is either empty or a disjoint union of embedded
$J$-holomorphic curves $\{\Sigma_j\}$ such that $c_1(K_X)\cdot \Sigma_j=0$
for each $j$.

(2) Let $g\in G_0$ be an element of order $4$. Then $\text{Fix}(g)$
consists of $4$ isolated fixed points, all with a local representation
contained in $SL_2(\C)$.
\end{lemma}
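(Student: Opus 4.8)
The plan is to linearize the action at each fixed point: since $g$ preserves the $G$-equivariant almost complex structure $J$, the differential $dg_p$ lies in $U(2)$, so for an involution its eigenvalues are $\pm1$, giving either an isolated point of type $(-1,-1)\in SL_2(\C)$ or a $2$-dimensional $J$-holomorphic fixed component along which $dg_p$ has type $(1,-1)$ (determinant $-1$). The global input comes from two facts. First, by Lemma 2.1 the action of $g$ on the $2$-plane $\langle\omega\rangle^{\perp}\subset H^{+}$ is a rotation; for an involution this rotation is either trivial — exactly when $g\in G_0$, so that $\text{tr}(g\mid H^{+})=3$ — or rotation by $\pi$, when $g\in G\setminus G_0$, so that $\text{tr}(g\mid H^{+})=-1$. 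Equivalently, $g$ acts on the ``holomorphic $2$-form'' directions by $+1$ or by $-1$, so the equivariant $\mathrm{Spin}^{\C}$-Dirac (holomorphic Lefschetz) number equals $2$ or $0$ respectively (using $b_1(X)=0$). Second, these trace values feed the topological Lefschetz formula $\chi(\text{Fix}(g))=2+\text{tr}(g\mid H^{2})$ and the Atiyah--Singer $G$-signature theorem, in which an isolated $(-1,-1)$ point contributes $0$ to $\text{sign}(g)$ and a fixed curve $\Sigma_j$ contributes its self-intersection $[\Sigma_j]^{2}$.

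For an involution $g\in G_0$ I would combine these. Writing $\text{Fix}(g)$ as $k$ isolated points together with curves $\{\Sigma_j\}$ of genus $g_j$, the $G$-signature theorem gives $3-\text{tr}(g\mid H^{-})=\sum_j[\Sigma_j]^{2}$ while the Lefschetz formula gives $\text{tr}(g\mid H^{-})=k+\sum_j(2-2g_j)-5$; eliminating $\text{tr}(g\mid H^{-})$ and using the adjunction identity $[\Sigma_j]^{2}+2-2g_j=-c_1(K_X)\cdot\Sigma_j$ yields
\[
k = 8 + \sum_j c_1(K_X)\cdot\Sigma_j .
\]
The crux is that there are in fact no fixed curves. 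A fixed curve forces $g$ to act by $-1$ on $\Lambda^{2,0}$ along $\Sigma_j$, which is incompatible with $g\in G_0$ acting trivially on $\langle\omega\rangle^{\perp}$; rigorously I would feed the structural input of \cite{CK1} (namely $\Sigma_j\subset\cup_i C_i$) together with positivity of intersections of the $J$-holomorphic configuration into the displayed identity and the holomorphic Lefschetz number $2$ to exclude them. Once curves are excluded, $\text{sign}(g)=0$ forces $\text{tr}(g\mid H^{-})=3$, hence $\text{tr}(g\mid H^{2})=6$ and $\chi(\text{Fix}(g))=8$, i.e. exactly $8$ isolated points.

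The case $g\in G\setminus G_0$ is dual: $g$ now acts by $-1$ on $\langle\omega\rangle^{\perp}$, so an isolated $(-1,-1)$ point (on which $g$ is trivial on $\Lambda^{2,0}$) is incompatible with this, and $\text{Fix}(g)$ consists of curves only or is empty. Running the same two index theorems with $\text{tr}(g\mid H^{+})=-1$ gives $k=\sum_j c_1(K_X)\cdot\Sigma_j$ with $k=0$, and adjunction together with positivity of the intersection numbers $C_i\cdot\Sigma_j\ge 0$ then forces $c_1(K_X)\cdot\Sigma_j=0$ for each $j$. For part (2), set $\tau=g^{2}\in G_0$; by part (1) $\text{Fix}(\tau)$ consists of $8$ isolated $(-1,-1)$ points, and since $\text{Fix}(g)\subseteq\text{Fix}(\tau)$ every fixed point of $g$ is isolated. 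There $dg_p^{2}=-\mathrm{Id}$, so $dg_p$ has eigenvalues in $\{i,-i\}$: either type $(i,-i)$ (determinant $1$, in $SL_2(\C)$, $G$-signature defect $+1$) or one of $(i,i),(-i,-i)$ (determinant $-1$, defect $-1$). With $\text{tr}(g\mid H^{+})=3$, eliminating $\text{tr}(g\mid H^{-})$ between the Lefschetz formula and the $G$-signature theorem forces the number of $(i,-i)$ points to be exactly $4$, independently of the rest.

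I expect the genuine difficulty throughout to be the exclusion of the ``wrong'' fixed-point type — fixed curves for involutions in $G_0$, isolated determinant $-1$ points for order-$4$ elements. The three index theorems (topological Lefschetz, $G$-signature, holomorphic Lefschetz) are mutually consistent and pin down the counts $8$ and $4$, but they leave a cancellation ambiguity that index theory alone cannot resolve. Closing it requires the structural description of $\cup_i C_i$ from \cite{CK1} together with positivity of intersections among $J$-holomorphic curves: by \cite{CK1} the offending fixed loci must sit inside $\cup_i C_i$, and analyzing the $g$-action on this invariant configuration (for $g$ of order $4$, inside $\text{Fix}(\tau)$) is what finally removes them, leaving $8$ isolated points for $G_0$-involutions and $4$ points in $SL_2(\C)$ for order-$4$ elements. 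This is exactly the point at which the specific geometry of the $G_0$-invariant curve configuration, rather than general index theory, must be used.
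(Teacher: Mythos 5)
Your two index-theorem computations are correct where they apply — the identity $k=8+\sum_j c_1(K_X)\cdot\Sigma_j$ for an involution in $G_0$, the count $\sum_j c_1(K_X)\cdot\Sigma_j=0$ with termwise vanishing by positivity of intersections for $g\in G\setminus G_0$ (this part coincides with the paper's argument), and the forced value $s_{+}=4$ for order $4$ elements. But the step you lean on throughout — that the global action of $g$ on $\langle\omega\rangle^{\perp}\subset H^{+}$ is pointwise incompatible with the ``wrong'' local type (fixed curves for $g\in G_0$, isolated $(-1,-1)$ points for $g\notin G_0$) — is not justified and cannot be made so by the tools you invoke. $X$ is only a symplectic homotopy $K3$: there is no holomorphic $2$-form and no holomorphic Lefschetz theorem, so the trace of $g$ on the space of self-dual harmonic forms does not control the local determinant of $dg_p$ at a fixed point. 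Moreover, your index identities are structurally incapable of excluding a fixed torus $T$ with $c_1(K_X)\cdot T=0$ coexisting with the $8$ isolated points: such a component contributes $0$ to the Lefschetz number, $0$ to the $G$-signature, and $0$ to your identity, so part (1) as stated (exactly $8$ isolated points and nothing else) is out of reach of your scheme. The paper supplies the missing local-to-global link through spin geometry, which your proposal omits entirely: since $X$ is simply connected and spin, $g$ lifts to the spin structure; by Atiyah--Bott \cite{AB} an even-type involution has only isolated fixed points while an odd-type one is free or has purely $2$-dimensional fixed set; and by Bryan \cite{Bry}, $g$ is of even type with $8$ isolated fixed points if and only if $b_2^{+}(X/g)=3$, i.e.\ if and only if $g\in G_0$. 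This dichotomy — a substantive Seiberg--Witten-theoretic input, not a consequence of the two classical index theorems plus positivity — is what proves both halves of part (1).

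For part (2) you correctly identify $s_{-}=0$ as the crux and correctly observe that index theory alone leaves the ambiguity $s_{-}\in\{0,2,4\}$, but your proposed resolution (analyzing the $g$-action on the $G_0$-invariant configuration $\cup_i C_i$ inside $\text{Fix}(g^2)$) is left entirely unexecuted, and it is not the paper's mechanism. The paper again exploits the spin lift: $g$ lifts with order $4$ (even type, since $g^2$ is), the induced lift of $g^2$ is canonical, and one computes the equivariant spin number $Spin(g^2,X)$. Each of the $8$ fixed points of $g^2$ contributes $\pm\frac{1}{4}$ according to the weight $r_m$ of the lift on the associated line bundle, while the quaternionic structure forces the character $\text{index}_{g^2}D=d_0+d_1\C_1$ to have $d_0,d_1$ even with $d_0+d_1=-\text{sign}(X)/8=2$; hence $Spin(g^2,X)=d_0-d_1=\pm 2$, so all eight weights $r_m$ coincide. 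Translated back to $g$, the weight at a type $(1,3)$ point and at a type $(1,1)$ or $(3,3)$ point restrict to different $r_m$ for $g^2$, so either $s_{+}=0$ or $s_{-}=0$; since $s_{+}=4$, one gets $s_{-}=0$. Without this argument (or a completed substitute for it), fixed points of type $(1,1)$ or $(3,3)$ — precisely those not in $SL_2(\C)$ — are not excluded, and part (2) remains unproved.
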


\begin{proof}
(1) Since $X$ is simply-connected, the action of $g$ can be lifted to
the spin structure, where there are two cases: $g$ is of even type,
meaning that the order of the lifting is $2$, and $g$ is of odd type,
meaning that the order of the lifting is $4$. Moreover,
$g$ has only isolated fixed points in the case of an even type, and $g$
is free or has only $2$-dimensional fixed components in the case of an odd
type (cf. \cite{AB}).

Suppose $g\in G_0$. Then $b_2^{+}(X/\langle g\rangle)=3$, and by \cite{Bry} 
$g$ is of even type with $8$ isolated fixed points. Now consider the case where
$g\in G\setminus G_0$. In this case $\text{Fix}(g)$ is either empty or is a disjoint 
union of embedded surfaces $\Sigma_j$ (cf. \cite{Bry}). Note that each 
$\Sigma_j$ is $J$-holomorphic 
because we choose $J$ to be $G$-equivariant.

We first show that $\sum_j c_1(K_X)\cdot \Sigma_j=0$. To see
this, suppose $t$ is the dimension of the $1$-eigenspace of $g$ in
$H^2(X;\R)$. Then by the Lefschetz fixed point theorem and
the $G$-signature theorem (cf. \cite{HZ}), we obtain
$$
\left\{\begin{array}{ccc}
2+ t-(22-t) & = & \sum_j\chi(\Sigma_j)\\
2(2-t) & = & -16 + \sum_j \frac{2^2-1}{3}\cdot\Sigma_j^2,\\
\end{array} \right .
$$
which gives $\sum_j(\chi(\Sigma_j)+\Sigma_j^2)=0$. By the adjunction
formula, we obtain
$$
\sum_j c_1(K_X)\cdot \Sigma_j=\sum_j-(\chi(\Sigma_j)+\Sigma_j^2)=0.
$$

On the other hand, recall from \cite{CK1} that $c_1(K_X)=\sum_i n_i C_i$
where $\{C_i\}$ is a finite set of $J$-holomorphic curves and
$n_i\geq 1$. For any $j$, if $\Sigma_j
\neq C_i$ for all $i$, then because of positivity of intersection of
$J$-holomorphic curves, $c_1(K_X)\cdot \Sigma_j\geq 0$ with equality iff
$\Sigma_j$ is disjoint from $\cup_i C_i$. If $\Sigma_j=C_i$ for some $i$,
then $c_1(K_X)\cdot \Sigma_j=c_1(K_X)\cdot C_i=0$ (cf. \cite{CK1}, Lemma 3.3).
In any event we have $c_1(K_X)\cdot \Sigma_j\geq 0$, which implies
$c_1(K_X)\cdot \Sigma_j=0$ because $\sum_j c_1(K_X)\cdot \Sigma_j=0$.

(2) Since $\text{Fix}(g)\subset\text{Fix}(g^2)$ and $g^2$ is an
involution in $G_0$, we see immediately that $g$ has only isolated fixed
points, with local representations of either type $(1,1)$, $(3,3)$, or
type $(1,3)$. We shall denote by $s_{+}$, $s_{-}$ the number of fixed
points of $g$ of type $(1,3)$ and type $(1,1)$ or $(3,3)$ respectively.
In order to determine $s_{+}$, $s_{-}$, we first compute with the Lefschetz
fixed point theorem and the $G$-signature theorem. To this end, it is useful
to observe that for the induced action of the involution $g^2$ on
$H^2(X;\R)$, the $1$-eigenspace has dimension $14$ and the $(-1)$-eigenspace
has dimension $8$. With this understood, if we denote by $t_{\pm}$ the
dimension of the $(\pm 1)$-eigenspace of $g$ in $H^2(X;\R)$, then
$t_{+}+t_{-}=14$. Now the Lefschetz fixed point theorem and the
$G$-signature theorem (cf. \cite{HZ}) give
rise to the following system of equations
$$
\left\{\begin{array}{ccc}
2+t_{+}-(14-t_{+}) & = & s_{+}+s_{-}\\
4(6-t_{+}) & = & -16+ 2s_{+}+(-2)s_{-},\\
\end{array} \right .
$$
where we use the assumption $g\in G_0$ so that $b_2^{+}(X/\langle g\rangle)=3$, and we use
the fact that the signature defect at a fixed point of type $(1,3)$ and
type $(1,1)$ or $(3,3)$ is $2,-2$ respectively, and the signature defect at a fixed point of $g^2$ is $0$. 
(This follows by a direct calculation using the formulas in \cite{HZ}.) The solutions
for $s_{+}$, $s_{-}$ (note that $s_{+}+s_{-}\leq 8$) are $s_{+}=4$ and
$s_{-}=0,2$ or $4$.

We proceed further by exploiting the fact that the action of $g$ can also
be lifted to the spin structure, and because $g^2$ is of even type, $g$ is
also of even type (i.e. a lifting of $g$ to the spin structure is of order
$4$). Moreover, the induced lifting of $g^2$ to the spin
structure is uniquely determined, i.e., it is independent of the different
choices of liftings of $g$ to the spin structure. With this understood,
the computation of the ``Spin-number'' $Spin(g^2,X)$ plays a crucial role
in the consideration which follows.

But first of all, a digression is needed in which we will recall a formula
for the local contribution of a fixed point to the ``Spin-number''
(cf. Lemma 3.8 of \cite{CK2}). Suppose $h$
is an order $p$ self-diffeomorphism ($p\geq 2$ and not necessarily prime)
which is spin and almost complex. Then because of the $h$-equivariant
almost complex structure, the $h$-equivariant spin structure corresponds
to an $h$-equivariant complex line bundle $L$, such that at an isolated
fixed point $m$ of local representation type $(a_m,b_m)$, the weight $r_m$
of the representation of $h$ on the fiber of $L$ at $m$ obeys
$2r_m+a_m+b_m=0 \pmod{p}$. Define $k(h,m)\equiv (2r_m+a_m+b_m)/p$.
Then the local contribution of $m$ to $Spin(h,X)$ is
$$
I_m=(-1)^{k(h,m)+1} \cdot\frac{1}{4}\csc(\frac{a_m\pi}{p})
\csc(\frac{b_m\pi}{p}).
$$
End of digression.

We apply the above formula to the involution $h\equiv g^2$. For each fixed
point $m$ of $g^2$, $(a_m,b_m)=(1,1)$, so that the local contribution
$I_m=\frac{1}{4}$ or $-\frac{1}{4}$, depending on whether $r_m=0$ or $1$. Now suppose
the $g^2$-index of the Dirac operator as a character is
$$
\text{index}_{g^2} \;D=d_0+d_1\C_1
$$
where $\C_k$ is the $1$-dimensional weight-$k$ representation. Then both
$d_0$ and $d_1$ are even integers because of the quaternion structure.
Since there are only $8$ fixed points and each contributes
$I_m=\frac{1}{4}$ or $-\frac{1}{4}$ to $Spin(g^2,X)$, it follows easily that
$Spin(g^2,X)=d_0-d_1$ only takes values of $-2$, $0$, or $2$.
One can further eliminate the possibility of $Spin(g^2,X)=0$ by
observing that $d_0+d_1=-\text{sign}(X)/8=2$ and that both $d_0,d_1$ are
even. The crucial consequence of the fact that $Spin(g^2,X)$ equals
either $-2$ or $2$ is that the weight $r_m$ of the representation of $g^2$
on the fiber of the complex line bundle $L$ is independent of the fixed
point $m$. This implies that for the element $g$, either $s_{+}=0$ or
$s_{-}=0$. Since $s_{+}=4$, $s_{-}$ must be
$0$, and the lemma follows.

\end{proof}

Next we discuss the fixed point set of an element of $G_0$ of an odd prime order. 
Unlike the cases we dealt with in Lemma 2.2, this requires analyzing the induced action on 
$\cup_i C_i$ in the way as we demonstrated in \cite{CK1}. In particular, we shall rely on several specific 
results from Section 3 of \cite{CK1}. We would like to point out that even though there is an additional 
assumption in \cite{CK1} that the action is trivial on $H^2(X;\R)$, this assumption is merely to ensure 
that each $(-2)$-sphere in $\cup_i C_i$ is invariant under the action. 

We recall that the connected components of $\cup_i C_i$ may be divided into the following three 
types (cf. Section 3 in \cite{CK1}):
\begin{itemize}
\item [{(A)}] A single $J$-holomorphic curve of self-intersection $0$
which is either an embedded torus, or a cusp sphere, or a nodal sphere.
\item [{(B)}] A union of two embedded $(-2)$-spheres intersecting at a
single point with tangency of order $2$.
\item [{(C)}] A union of embedded $(-2)$-spheres intersecting transversely.
\end{itemize}
Furthermore, a type (C) component may be conveniently represented by one of the graphs
of type $\tilde{A}_n$, $\tilde{D}_n$, $\tilde{E}_6$, $\tilde{E}_7$
or $\tilde{E}_8$ listed in Figure 1, where a vertex in a graph represents
a $(-2)$-sphere and an edge connecting two vertices represents a transverse,
positive intersection point of the two $(-2)$-spheres represented by the
vertices.

\begin{figure}[ht]

\setlength{\unitlength}{1pt}

\begin{picture}(420,420)(-210,-195)

\linethickness{0.5pt}

\put(-170,165){$\tilde{A}_n$}

\put(-120,165){\circle*{3}}

\put(-120,165){\line(1,1){30}}

\put(-90,195){\circle*{3}}

\put(-90,195){\line(1,0){30}}

\put(-60,195){\circle*{3}}

\put(-60,195){\line(1,-1){30}}

\put(-30,165){\circle*{3}}

\put(-120,165){\line(1,-1){30}}

\put(-90,135){\circle*{3}}

\put(-90,135){\line(1,0){30}}

\put(-60,135){\circle*{3}}

\multiput(-60,135)(3,3){10}{\line(1,1){2}}

\put(20,165){($n+1$ vertices, $n\geq 1$)}

\put(-170,90){$\tilde{D}_n$}

\put(-120,105){\circle*{3}}

\put(-120,105){\line(2,-1){30}}

\put(-120,75){\circle*{3}}

\put(-120,75){\line(2,1){30}}

\put(-90,90){\circle*{3}}

\put(-90,90){\line(1,0){30}}

\put(-60,90){\circle*{3}}

\put(-30,90){\circle*{3}}

\multiput(-60,90)(3,0){10}{\line(1,0){2}}

\put(0,90){\circle*{3}}

\put(-30,90){\line(1,0){30}}

\put(0,90){\line(2,1){30}}

\put(0,90){\line(2,-1){30}}

\put(30,105){\circle*{3}}

\put(30,75){\circle*{3}}

\put(60,90){($n+1$ vertices, $n\geq 4$)}

\put(-170,30){$\tilde{E}_6$}

\put(-120,30){\circle*{3}}

\put(-120,30){\line(1,0){30}}

\put(-90,30){\circle*{3}}

\put(-90,30){\line(1,0){30}}

\put(-60,30){\circle*{3}}

\put(-60,30){\line(1,0){30}}

\put(-30,30){\circle*{3}}

\put(-30,30){\line(1,0){30}}

\put(0,30){\circle*{3}}

\put(-60,30){\line(0,-1){60}}

\put(-60,0){\circle*{3}}

\put(-60,-30){\circle*{3}}

\put(-170,-60){$\tilde{E}_7$}

\put(-120,-60){\circle*{3}}

\put(-120,-60){\line(1,0){180}}

\put(-90,-60){\circle*{3}}

\put(-60,-60){\circle*{3}}

\put(-30,-60){\circle*{3}}

\put(0,-60){\circle*{3}}

\put(30,-60){\circle*{3}}

\put(60,-60){\circle*{3}}

\put(-30,-60){\line(0,-1){30}}

\put(-30,-90){\circle*{3}}

\put(-170,-150){$\tilde{E}_8$}

\put(-120,-150){\circle*{3}}

\put(-120,-150){\line(1,0){210}}

\put(-90,-150){\circle*{3}}

\put(-60,-150){\circle*{3}}

\put(-30,-150){\circle*{3}}

\put(0,-150){\circle*{3}}

\put(30,-150){\circle*{3}}

\put(60,-150){\circle*{3}}

\put(90,-150){\circle*{3}}

\put(-60,-150){\line(0,-1){30}}

\put(-60,-180){\circle*{3}}

\end{picture}

\caption{}

\end{figure}

With the preceding understood, let $g\in G_0$ be an element of order $3$. 
Then $\text{Fix}(g)$ may be
divided into subsets (or groups) of the following four types.
\begin{itemize}
\item [{(I)}] One fixed point with local representation in $SL_2(\C)$.
\item [{(II)}] Three fixed points, all with local representation of type
$(k,k)$ for some $k\neq 0 \mod{3}$.
\item [{(III)}] One fixed point of local representation type $(k,k)$,
$k\neq 0 \mod{3}$, and one fixed spherical component of self-intersection $-2$.
\item [{(IV)}] One fixed toroidal component of self-intersection $0$.
\end{itemize}
Moreover, a group of fixed points of type (III) comes only
from a type (C) component of $\cup_i C_i$. For the sake of later arguments
in this section, we shall give below a brief analysis of the action of $g$
on a type (C) component of $\cup_i C_i$.  

Let $\Lambda$ be a type (C) component which is invariant under $g$.  Then there is an
induced action of $g$ on the graph representing $\Lambda$. We consider first the case where
$\Lambda$ is represented by a $\tilde{A}_n$ graph. Then the induced action
of $g$ on the graph is either a trivial action or a rotation. If the induced action is trivial, 
then the fixed points of $g$ contained in $\Lambda$ are either entirely
of type (I) or consist of $(n+1)/3$ groups of type (III) fixed points (cf. Proposition 3.7 in \cite{CK1}). 
(We note that by Lemma 3.6 in \cite{CK1}, $\Lambda$ can not be a union of three $(-2)$-spheres intersecting transversely at a single point in this case.)
If the induced action is a rotation, then either $\Lambda$ contains no fixed points of $g$, 
or $\Lambda$ is a union of three $(-2)$-spheres intersecting transversely at a single point, in which case
the intersection point is the only fixed point of $g$ contained in $\Lambda$ and it is a type (I) fixed point. 

Next we assume that $\Lambda$ is represented by a $\tilde{E}_6$ graph. We claim that
the induced action on the graph must be non-trivial. To see this, suppose the induced action
of $g$ on the graph is trivial. If we denote by $C_0$ the $(-2)$-sphere in $\Lambda$ which is
represented by the vertex of the graph that are adjacent to three other vertices (i.e., the central
vertex), then $C_0$ must be fixed under the action of $g$ (because a nontrivial cyclic group action 
on $\s^2$ has exactly $2$ fixed points). Let $C_1$ be a $(-2)$-sphere in $\Lambda$ intersecting with 
$C_0$ and $C_2$ be the $(-2)$-sphere intersecting with $C_1$. Then by Lemma 3.6 of \cite{CK1}, the rotation numbers at the two fixed points associated to $C_1$ must be $(0,1)$ and $(1,1)$, with $(0,1)$ 
being the rotation numbers at the intersection point of $C_0$ and $C_1$. (See Section 3 in \cite{CK1} 
for a discussion on rotation numbers.) This implies, by Lemma 3.6 of \cite{CK1} again, that the rotation
numbers at the two fixed points of $g$ associated to $C_2$ are $(1,1)$ and $(0,1)$, with $(1,1)$ being
the rotation numbers at the intersection point of $C_1$ and $C_2$. It follows, since the rotation numbers
at the other fixed point on $C_2$ are $(0,1)$, that $C_2$ must intersect with a $2$-dimensional 
component of the fixed point set of $g$. But this is clearly a contradiction, hence our claim that the
induced action of $g$ on the graph must be non-trivial. With this understood, it is easily seen that 
$\Lambda$ contains exactly two fixed points of $g$, and these two fixed points are on the 
$(-2)$-sphere $C_0$.  Furthermore, it follows from Lemma 3.6 in \cite{CK1} that these two
fixed points are of type (I).  By a similar argument, we show that $\Lambda$ can not be represented 
by a $\tilde{E}_8$ graph, because a $\tilde{E}_8$ graph admits no non-trivial actions of $g$. 

Suppose $\Lambda$ is represented by a $\tilde{D}_n$ graph. Then the induced action on the graph
must be trivial, and the fixed points of $g$ contained in $\Lambda$ consist
of $1$ group of type (II) fixed points and $(n-1)/3$ groups of type (III)
fixed points. Suppose $\Lambda$ is represented by a $\tilde{E}_7$ graph,
then the induced action on the graph must be trivial and $\Lambda$ gives
rise to $3$ groups of type (III) fixed points of $g$. (See Proposition 3.7 in \cite{CK1}.)

Finally, it is helpful to note that only a type $\tilde{A}_n$, $\tilde{D}_n$, or $\tilde{E}_7$ component of 
$\cup_i C_i$ can possibly contain a group of type (III) fixed points of $g$, and only a type $\tilde{D}_n$ component of $\cup_i C_i$ can contain a group of type (II) fixed points of $g$. 

The fixed point set of an element of order $3$ in $G_0$ is described in the following

\begin{lemma}
Suppose $g\in G_0$ is an element of order $3$. Let $u,v$ and $w$ be the
number of groups of type (I), (II) and (III) fixed points of $g$
respectively, and let $t=b_2(X/\langle g\rangle)$. Then
\begin{itemize}
\item [{(1)}] $2u+3v=12$, $w\leq 6$ and $t\geq 10$.
Moreover, $t=10$ iff $(u,v,w)=(6,0,0)$.
\item [{(2)}] Suppose $w=0$. If there exist $3$ distinct involutions
$h_1,h_2,h_3\in G_0$ each of which commutes with $g$, then $(u,v)=(6,0)$.
\item [{(3)}] Suppose $w=0$. If $g$ is contained in a subgroup of $G_0$
which is isomorphic to $T_{24}$, then $(u,v)=(6,0)$.
\end{itemize}
\end{lemma}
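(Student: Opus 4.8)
The plan is to run the same Lefschetz-plus-$G$-signature machine used in Lemma 2.2, now for the order-$3$ element $g$, and extract the two clean identities that control $u,v,w,t$. Write $\zeta=e^{2\pi i/3}$ and let $t=\dim H^2(X;\R)^g=b_2(X/g)$, so the $\zeta$- and $\zeta^2$-eigenspaces each have dimension $(22-t)/2$. Since $g\in G_0$ forces $g$ to act trivially on the self-dual part $H^{+}$ (because $b_2^{+}(X/g)=3$, so the invariant positive subspace is all of $H^+$), both the Lefschetz number and the equivariant signature become functions of $t$ alone: one computes $\chi(\text{Fix}(g))=-9+\tfrac{3t}{2}$ and $\text{sign}(g,X)=17-\tfrac{3t}{2}$. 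On the fixed-point side I would tabulate the local data of the four types: a type (I) point $(1,2)$ contributes $1$ to $\chi$ and $-\cot(\tfrac{\pi}{3})\cot(\tfrac{2\pi}{3})=+\tfrac13$ to the signature; each of the three points of a type (II) group contributes $1$ and $-\tfrac13$; a type (III) group contributes a point of type $(k,k)$ (giving $1$ and $-\tfrac13$) together with a fixed $(-2)$-sphere whose surface defect is $\csc^2(\tfrac{\pi}{3})\cdot(-2)=-\tfrac83$ and whose Euler contribution is $2$; and a type (IV) toroidal component contributes $0$ to both, since its self-intersection and Euler characteristic vanish.

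Feeding this into the two global identities is the crux of part (1). The Euler/Lefschetz equation reads $u+3v+3w=-9+\tfrac{3t}{2}$, and the signature equation reads $\tfrac13 u-v-3w=17-\tfrac{3t}{2}$. Eliminating $t$ between them, the $w$-dependence cancels and one is left with $2u+3v=12$; substituting back gives $t=10+v+2w$. Since $u,v,w\ge0$ this yields $t\ge10$ with equality exactly when $v=w=0$, i.e. $(u,v,w)=(6,0,0)$; and the nonnegativity (and evenness) of the anti-invariant dimension $22-t=12-v-2w$ forces $v\in\{0,2,4\}$ and $w\le6$. This disposes of part (1) entirely and reduces parts (2) and (3) to excluding the two extra solutions $(u,v)=(3,2)$ and $(0,4)$ of $2u+3v=12$ under the stated hypotheses.

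For parts (2) and (3) the key geometric observation concerns the source of a type (II) group: its three points of type $(k,k)$ lie on $\cup_iC_i$ (they are non-$SL_2(\C)$), but by the graph analysis recalled just before the lemma a $\tilde D_n$ component would also produce type (III) points, which $w=0$ forbids; hence every type (II) group must sit on a type (A) component, i.e. a $g$-invariant elliptic curve $C$ of self-intersection $0$ on which $g$ acts as an order-$3$ automorphism with three fixed points. The plan is to show this configuration is incompatible with the stated symmetry. For part (2), each involution $h_i\in G_0$ commutes with $g$ and so permutes these invariant elliptic curves; after arranging that one is preserved (or deriving a contradiction from too many invariant elliptic curves when $v=4$), $h_i|_C$ is nontrivial — Lemma 2.2(1) shows $h_i$ has only isolated fixed points, so it cannot fix $C$ pointwise — and $\langle g,h_i\rangle$ induces a $\Z_6$-action on $C$. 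I would then run the Spin-number computation of Lemma 2.2(2) for the order-$6$ elements $gh_i$, whose square and cube are the already-understood $g^{-1}$ and $h_i$, to over-determine the local weights at the common fixed points and reach a contradiction, forcing $v=0$. For part (3) the group $T_{24}=Q_8\times_\phi\Z_3$ has a unique (central) involution $z$, so three independent involutions are unavailable; instead I would use $z$ together with the three order-$4$ cyclic subgroups of $Q_8$, which $g$ cyclically permutes, and the order-$4$ fixed-point data from Lemma 2.2(2), to constrain $\text{Fix}(g)$ and again exclude the invariant elliptic curve.

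The routine part is the bookkeeping in part (1), where the cancellation of $w$ from the signature identity makes $2u+3v=12$ fall out at once. The main obstacle is parts (2)--(3), and within them the genuinely delicate step is the equivariant-index (Spin-number) argument for the order-$6$ elements: one must verify, exactly as in Lemma 2.2(2), that the Spin-number is pinned to an extremal value so that the relevant weight $r_m$ is independent of the fixed point, and then show this independence is inconsistent with a $g$-invariant elliptic curve carrying three type (II) points. Carrying out the $T_{24}$ case without commuting involutions, using only the central $z$ and the $g$-permuted order-$4$ subgroups, is where the argument will be most group-specific and where I expect the analysis to be hardest.
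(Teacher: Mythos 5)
Part (1) of your proposal is correct and coincides with the paper's argument: you set up the same Lefschetz and $G$-signature equations (your normalization $\text{sign}(g,X)=17-\tfrac{3t}{2}$ is equivalent to the paper's $3(6-t)=-16+\tfrac{2}{3}u-2v-6w$, and your local contributions match the paper's total defects $\tfrac23$, $-2$, $-6$ per group), and the elimination giving $2u+3v=12$ together with $t=10+v+2w$ yields $w\le 6$, $t\ge 10$ with equality iff $(u,v,w)=(6,0,0)$, exactly as in the paper.

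Parts (2) and (3), however, contain a genuine gap: you announce a strategy and defer precisely the steps where the content lies, and the strategy you propose is unlikely to close. First, you never establish that $g$ and $h_i$ have a common fixed point at all; the paper gets this from the counting $|\text{Fix}(h_i)|=8\equiv 2\pmod 3$, so the induced $g$-action on these $8$ points fixes at least $2$ of them. Second, your proposed contradiction via a Spin-number computation for the order-$6$ elements $gh_i$ at a common fixed point cannot work as described: a single $g$-invariant torus carrying an effective $\Z_6$-action with its one fixed point (whose local representation can lie in $SL_2(\C)$) is a locally consistent configuration, so no index-theoretic computation localized at that point can exclude it. The paper's contradiction is instead global and combinatorial: a $\Z_6$-action on $T^2$ has exactly one fixed point, so each $h_i$-invariant torus carries exactly one common fixed point of $g$ and $h_i$; the $\ge 2$ common fixed points therefore force $\ge 2$ invariant tori per involution; with $3$ involutions and only $4$ tori (in the case $(u,v)=(0,4)$), pigeonhole produces $h_i\neq h_j$ preserving the same torus, and the final contradiction uses $\text{Fix}(h_i)\cap\text{Fix}(h_j)=\emptyset$ — which holds because distinct involutions in $G_0$ act as $-\mathrm{id}$ on the tangent space at each of their $8$ isolated fixed points, so a common fixed point would force $h_i=h_j$. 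None of these ingredients (the mod-$3$ count, the one-fixed-point fact, the disjointness of fixed sets, the pigeonhole) appears in your sketch, and the case $(u,v)=(3,2)$ is not addressed at all. Similarly for (3), the paper's proof is again a counting argument, not a local index one: $g$ acts on $\text{Fix}(Q_8)$, which has $2$ or $4$ points, hence $T_{24}$ has a fixed point; the case $(0,4)$ dies because $T_{24}$ admits no action on $T^2$; and the case $(3,2)$ dies because $g$ and the central involution $-1$ must have exactly $5$ common fixed points, while $-1$ restricted to each of the two invariant tori has exactly $4$ fixed points, so all $8$ points of $\text{Fix}(-1)$ lie on the two tori — contradicting that the $3$ type (I) points of $g$, which lie off the tori, are also fixed by $-1$. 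Your alternative plan using the order-$4$ subgroups of $Q_8$ is left entirely unexecuted, so as it stands parts (2) and (3) are not proved.
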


\begin{proof}
(1) Note that a toroidal fixed component $Y$ of $g$ does not make any
contribution
in the Lefschetz fixed point theorem because $\chi(Y)=0$, nor does it
contribute in the $G$-signature theorem because $Y\cdot Y=0$. Hence we shall
ignore it in our calculations below.

Observe that $t=b_2(X/\langle g\rangle)$ is the dimension of the $1$-eigenspace of $g$
in $H^2(X;\R)$, and that $t-(22-t)/2$ is the trace of $g$ on $H^2(X;\R)$.
Hence the Lefschetz fixed point theorem and the $G$-signature theorem
give rise to the following equations
$$
\left\{\begin{array}{ccc}
2+ t-(22-t)/2 & = & u+3v+3w\\
3(6-t) & = & -16 + \frac{2}{3}u-2v-6w\\
\end{array} \right .
$$
where we make use of $b_2^{+}(X/\langle g\rangle)=3$ and the fact that the total signature
defect for a group of type (I), (II) and (III) fixed points is $\frac{2}{3}$, $-2$ and $-6$ respectively.
(The claim concerning the total signature defect follows by a direct calculation 
using the formulas in \cite{HZ}.)
The equation $2u+3v=12$ follows immediately, which has $3$ solutions:
$(u,v)=(6,0), (3,2)$, and $(0,4)$. The inequality $w\leq 6$ follows from
$u+3v\geq 6$ and the fact that $t\leq b_2(X)=22$. It is also easy
to check that $t\geq 10$, with $t=10$ iff $(u,v,w)=(6,0,0)$.

(2) Suppose $(u,v)=(0,4)$, in which case
$g$ has $12$ isolated fixed points. From the analysis of a possible action of $g$ on a type (C)
component preceding Lemma 2.3, we see that only a component represented by a type
$\tilde{D}_n$ graph can possibly contain a group of type (II) fixed points, and at the same time,
there must be groups of type (III) fixed points. Since we assume that $w=0$, these $12$ points 
can not be contained in type (C) components of $\cup_i C_i$. It follows by Proposition 3.7 of \cite{CK1}
that the $12$ fixed points of $g$ must be contained in $4$ toroidal components of $\cup_i C_i$, where
each toroidal component contains exactly $3$ isolated fixed points. 

By Lemma 2.2 each $h_i$ has $8$ isolated fixed points.  Since $g$ and $h_i$ commute, there 
is an induced action of $g$ on $\text{Fix}(h_i)$, and it follows that $g$ and $h_i$ must have at least 
$2$ common fixed points. This implies that one of the toroidal components containing the fixed points 
of $g$ is invariant under $h_i$, and consequently,
$g$ and $h_i$ generate an effective cyclic action of order $6$ on that torus. Since an order-$6$
cyclic action on a torus is either free or has only $1$ fixed point, we see that 
distinct common fixed points of $g$ and $h_i$ are contained in distinct 
toroidal components of $\cup_i C_i$. It follows easily that there
are $i,j$ with $i\neq j$ such that $h_i$ and $h_j$ leave one of the toroidal
components invariant, because for each $i$, $g$ and $h_i$ have at least
$2$ common fixed points and there are exactly $4$ toroidal components of
$\cup_i C_i$ containing the fixed points of $g$. But this is easily seen
a contradiction, as $h_i$ acts freely on the set of common fixed points
of $g$ and $h_j$ because $\text{Fix}(h_i)\cap \text{Fix}(h_j)=\emptyset$.
The case where $(u,v)=(3,2)$ can be similarly eliminated.
This proves that $(u,v)=(6,0)$.

(3) Note that $T_{24}=Q_8\times_\phi\Z_3$, where we may assume without
loss of generality that the action of $\Z_3=\langle g\rangle$ on
$$
Q_8=\{i,j,k|i^2=j^2=k^2=-1,\; ij=k,\; jk=i,\; ki=j\}
$$
is given by $\phi(g)(i)=j$, $\phi(g)(j)=k$ and $\phi(g)(k)=i$.

By Lemma 2.2, it follows easily that $Q_8$ has either $2$ or $4$ isolated
fixed points (see e.g. \cite{Xiao}). Since there is an
induced action of $g$ on the fixed point set of $Q_8$, we see immediately
that $T_{24}$ has at least $1$ fixed point.

Suppose $(u,v)=(0,4)$. As we argued in (2) above, at least
one of the $4$ toroidal components must be invariant under $T_{24}$
because it contains a fixed point of $T_{24}$. But this is impossible
as there are no such $T_{24}$-actions on the torus (cf. \cite{Sct}).

If $(u,v)=(3,2)$, then $g$ and $-1\in Q_8$ must have $5$ common fixed points.
It follows as we argued in (2) above that each of the $2$ toroidal components
of $\cup_i C_i$ which contains the type (II) fixed points of $g$ must be
invariant under $-1$, with each containing exactly $1$ common fixed point of
$g$ and $-1$. But on the other hand, by Proposition 3.7 in \cite{CK1},
each of the $2$ toroidal components contains exactly $4$ fixed points of
$-1$, so that all of the fixed points of $-1$ are contained in there.
This is a contradiction to the fact that the $3$ type (I) fixed points of
$g$, which are not contained in the $2$ toroidal components, are also fixed
under $-1$. Hence the case where $(u,v)=(3,2)$ is also ruled out.

\end{proof}

\noindent{\bf Proof of Theorem 1.0}:

\vspace{3mm}

The general strategy goes as follows. For each of the $6$ maximal symplectic $K3$ groups
listed in Theorem 1.0, there is a subgroup $H\subset G_0$ such that for any symplectic holomorphic 
action of $H$ on a $K3$ surface, one has $\mu(H)=5$ where
$$
\mu(H)\equiv \frac{1}{|H|}\sum_{g\in H} tr(g). 
$$
(See \cite{Mu, Xiao} for the calculation of $\mu(H)$ for a symplectic automorphism
group $H$ of a $K3$ surface.) The main task in the proof of Theorem 1.0 is to show
that any effective action of $H$ on $X$ via symplectic symmetries must have the same 
fixed point set as does a symplectic holomorphic action of $H$ (except for possible toroidal 
fixed components).  As a consequence this implies that 
$$
\dim (H^2(X;\R))^H=\dim (H^\ast(X;\R))^H-2=\mu(H)-2=5-2=3. 
$$
On the other hand, $b_2^{+}(X/H)=3$ because $H\subset G_0$, so that $H^2(X;\R)^H$ 
must be positive-definite. It follows that $c_1(K_X)=0$ because $c_1(K_X)\in H^2(X;\R)^H$
and $c_1(K_X)\cdot c_1(K_X)=0$, which is equivalent to $X$ being minimally exotic, i.e., $r_X=0$.

\vspace{3mm}

{\it Case (1)}. $G=L_2(7)$. First note that $G_0=G$, i.e., $b_2^{+}(X/G)=3$.  An element of $G$ is
of order $2$, $3$, $4$, or $7$. The following lemma describes the fixed point set of an order-$7$
element of $G$. 

\begin{lemma}
Let $g\in G=L_2(7)$ be any element of order $7$. Then $g$ has exactly
$3$ isolated fixed points, and is either pseudofree or has only 
toroidal fixed components.
\end{lemma}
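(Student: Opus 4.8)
The plan is to mimic the Lefschetz/$G$-signature bookkeeping of Lemmas 2.2 and 2.3, now exploiting that $7$ is a large prime. First I would pin down the representation of $g$ on $H^2(X;\R)$. Since $H^2(X;\Q)$ is a $\Q[\Z_7]$-module and $\Q[\Z_7]=\Q\oplus\Q(\zeta_7)$ with $[\Q(\zeta_7):\Q]=6$, the non-trivial part of the action is a multiple of the $6$-dimensional representation; hence if $t$ denotes the dimension of the $1$-eigenspace of $g$ on $H^2(X;\R)$, then $22-t\equiv 0\pmod 6$, so $t\in\{4,10,16\}$ (the case $t=22$ being $g=1$). The trace of $g$ on $H^2$ is $t-(22-t)/6$, so by the Lefschetz fixed point theorem $\chi(\text{Fix}(g))=2+t-(22-t)/6\in\{3,10,17\}$. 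Because every toroidal component contributes $0$ to $\chi$, this Euler number equals the number of isolated fixed points of $g$.

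Next I would control the fixed set geometrically using the main technical input from \cite{CK1}. Every $2$-dimensional component of $\text{Fix}(g)$ lies in $\cup_i C_i$ and is $J$-holomorphic; since $\text{Fix}(g)$ is a smooth submanifold, the singular (cusp/nodal) curves cannot be fixed pointwise, so a $2$-dimensional fixed component is either an embedded torus (self-intersection $0$) or an embedded $(-2)$-sphere. Tori are exactly the allowed toroidal components and contribute $0$ both to the Lefschetz number and to the $G$-signature; the substantive point is to rule out a pointwise-fixed $(-2)$-sphere and, more generally, to show that every isolated fixed point has local representation in $SL_2(\C)$, i.e. of type $(a,7-a)$. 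This I would do as in Lemma 2.3, by running the induced $\Z_7=\langle g\rangle$ action through the classification of the connected components of $\cup_i C_i$ in Figure 1: for a prime as large as $7$ the only configuration graph admitting a non-trivial automorphism of order $7$ is a cycle $\tilde A_n$ with $7\mid(n+1)$, on which $\Z_7$ acts freely, producing neither a fixed sphere nor an isolated fixed point; every remaining component is fixed with trivial graph action, and the local analysis of \cite{CK1} then forces the fixed points on it to be isolated of $SL_2(\C)$ type (or a fixed torus). This step is where I expect the main obstacle to lie, since, just as in Lemma 2.3(2)--(3), one must combine the graph combinatorics with the $G$-index theorems to exclude the stray non-$SL_2(\C)$ fixed points (in particular those coming from an invariant cusp or nodal sphere carrying an order-$7$ rotation).

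With the fixed set now consisting of isolated $SL_2(\C)$ points of types $(1,6),(2,5),(3,4)$ (in numbers $n_1,n_2,n_3$) together with possibly some fixed tori, the count is finished by a rationality argument in the $G$-signature theorem. Using that $g\in G_0$ acts trivially on the self-dual part $H^{2,+}$, one has $\text{sign}(g,X)=6-t+(22-t)/6$, while the Atiyah--Singer local contribution of an $SL_2(\C)$ point of type $(a,7-a)$ is $-\cot(a\pi/7)\cot((7-a)\pi/7)=\cot^2(a\pi/7)$, so that $\text{sign}(g,X)=n_1\cot^2(\pi/7)+n_2\cot^2(2\pi/7)+n_3\cot^2(3\pi/7)$. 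The three numbers $\cot^2(k\pi/7)$, $k=1,2,3$, form a full Galois orbit in the real cubic field $\Q(\zeta_7+\zeta_7^{-1})$ and satisfy $\sum_k\cot^2(k\pi/7)=5$; hence their only rational combinations are the symmetric ones, forcing $n_1=n_2=n_3=:n$.

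Finally, the number of isolated fixed points is $3n$, and this must equal $\chi(\text{Fix}(g))\in\{3,10,17\}$. Since neither $10$ nor $17$ is divisible by $3$, we must have $3n=3$, i.e. $n=1$: the element $g$ has exactly three isolated fixed points, one of each type $(1,6),(2,5),(3,4)$ (consistently $\text{sign}(g,X)=5$, matching $t=4$), and its only possible $2$-dimensional fixed components are tori. Thus $g$ is either pseudofree or has at most toroidal fixed components, as claimed. I emphasize that the hardest part of this route is the second paragraph; once the fixed points on $\cup_i C_i$ are shown to be $SL_2(\C)$-isolated or toroidal, the divisibility-by-$3$ mechanism delivers the exact count essentially for free.
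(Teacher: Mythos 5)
There is a genuine gap, and it sits exactly where you flag it: you never rule out the isolated fixed points whose local representation is \emph{not} in $SL_2(\C)$, and this exclusion is the actual content of the paper's proof, not a technicality one can defer. The analysis of \cite{CK1} for an order-$7$ element allows, besides $SL_2(\C)$-points, groups of \emph{two} fixed points with local representations of type $(2k,3k)$ and $(-k,6k)$ (coming, e.g., from an invariant cusp or nodal sphere in a type (A) component --- these have no configuration graph, so your graph-combinatorics step in the second paragraph cannot see them; and an invariant $\tilde A_n$ chain with trivial graph action likewise produces fixed points of type $(k,k)$ and pointwise-fixed spheres, exactly as in the type (III) groups of Lemma 2.3, not $SL_2(\C)$-points). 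Once such pairs are admitted, your Galois/rationality mechanism collapses: writing $u,v$ for the numbers of $SL_2(\C)$-points and of such pairs, the Lefschetz and $G$-signature equations admit the solutions $(t,u,v)=(4,3,0)$, $(10,2,4)$, $(16,1,8)$, $(22,0,12)$, all perfectly consistent with every index theorem. In particular the configuration $(22,0,12)$ --- twelve non-$SL_2(\C)$ pairs, $g$ acting trivially on $H^2(X;\R)$ --- satisfies all the constraints you use, so no bookkeeping of the kind you propose can eliminate it. (Relatedly, your dismissal of $t=22$ as ``the case $g=1$'' is wrong: $t=22$ means $g$ is \emph{homologically} trivial, not trivial.)

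The structural reason your route cannot close is that you only use the cyclic group $\langle g\rangle$, whereas the lemma is false at that level of generality as far as these methods can detect; the paper leans essentially on the ambient group $L_2(7)$. Concretely: (i) the normalizer of $\langle g\rangle$ is the maximal subgroup $D$ of order $21$, of index $8$, and orbit-counting of invariant type (B)/(C) configurations under $G$ against the constraint $b_2^{-}(X)=19$ is what kills the non-$SL_2(\C)$ fixed points on those components; (ii) $D$ induces a \emph{free} $\Z_3$-action on the isolated fixed point set of $g$ (since $D$ cannot act freely and linearly on $\s^3$), so $u+2v\equiv 0\pmod 3$, eliminating $(10,2,4)$ and $(16,1,8)$; (iii) the case $(22,0,12)$ is excluded because simplicity of $L_2(7)$ would make the whole $G$-action homologically trivial, contradicting McCooey's theorem for nonabelian groups. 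Your cotangent-Galois observation (forcing $n_1=n_2=n_3$ among $SL_2(\C)$-types, hence divisibility by $3$ of the count) is correct and pleasant, but it only refines the easy endgame: once $v=0$ is known, the two equations already force $(t,u)=(4,3)$ outright. So the proposal reproduces the routine Lefschetz/$G$-signature half of the argument and substitutes a hope for the hard half.
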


\begin{proof}
We first show that if a type (C) component of $\cup_i C_i$ 
contains a fixed point of $g$, then its local representation at the fixed point must lie in 
$SL_2(\C)$. To this end, we recall that the normalizer of $\langle g\rangle$ in $G$
is a maximal subgroup $D$ of order $21$ which is a semi-direct product of
$\Z_7$ by $\Z_3$ (cf. \cite{Con}). Let $\Lambda$ be a type (C)
component which contains a fixed point of $g$. (Note that $\Lambda$ is invariant under $g$.)
If it is represented by a type $\tilde{D}_n$, $\tilde{E}_6$,
$\tilde{E}_7$ or $\tilde{E}_8$ graph, then since the $(-2)$-spheres in
$\Lambda$ generate a lattice in $H_2(X;\Z)$ which contains a
negative-definite sublattice of rank at least $4$, the orbit of $\Lambda$
under the action of $G$ can have at most $4$ components because of the
constraint $b_2^{-}(X)=19$. On the other hand, one can easily check that
$\Lambda$ is not invariant under $G=L_2(7)$, and since the index of the
maximal subgroup $D$ is $8$, there are at least $8$ components in the orbit
of $\Lambda$, which is a contradiction. Suppose $\Lambda$ is represented by a type
$\tilde{A}_n$ graph. Then each $(-2)$-sphere in $\Lambda$ is invariant under $g$ (since
$\Lambda$ contains a fixed point of $g$). By Proposition 3.7 in \cite{CK1}, there are $3$
possibilities: (i) $n=-1\mod{7}$, so that $\Lambda$ contains at
least seven $(-2)$-spheres, (ii) $\Lambda$ only contains fixed points of $g$ whose 
local representations lie in $SL_2(\C)$, (iii) $\Lambda$ is a union of three $(-2)$-spheres 
intersecting transversely at one single point. Note that case (i) can be eliminated by a similar
argument as in the cases of type $\tilde{D}_n$, $\tilde{E}_6$, $\tilde{E}_7$ or $\tilde{E}_8$ graphs.
Case (iii) is ruled out as follows.  Note that the maximal subgroup $D$ can not act
linearly and freely on $\s^3$, so that such a $\Lambda$ can not be invariant
under the action of $D$. Hence if such a $\Lambda$ exists, there
must be at least $3\times 8=24$ components in the orbit of $\Lambda$ under
the action of $G$. But this is impossible because of the constraint
$b_2^{-}(X)=19$.  This finishes the proof of our claim.  

Secondly, we will show that there are no type (B) components which contain
a fixed point of $g$. Suppose $\Lambda$ is such a type (B) component. One can check 
easily that $\Lambda$ can not be invariant under the action of $D$, so that there are at
least $24$ type (B) components in $\cup_i C_i$. But this contradicts the fact
that $b_2^{-}(X)=19$. Hence there are no type (B) components containing
a fixed point of $g$. 

Finally, suppose a type (A) component $\Lambda$ of $\cup_i C_i$ contains a fixed point of $g$.
Then by Proposition 3.7 in \cite{CK1}, $\Lambda$ is either a fixed toroidal component, or $\Lambda$
is a cusp sphere containing $2$ fixed points of $g$ of local representations of type $(2k,3k)$,
$(-k,6k)$ for some $k\neq 0 \mod{7}$ respectively.

With the preceding understood, we conclude that $g$ has only fixed toroidal components, 
and that the isolated fixed points of $g$ can be divided into groups of the following two types:
\begin{itemize}
\item [{(1)}] One fixed point with local representation in $SL_2(\C)$.
\item [{(2)}] Two fixed points with local representation of type $(2k,3k)$,
              $(-k,6k)$ for some $k\neq 0 \mod{7}$ respectively.
\end{itemize}

Next we compute with the Lefschetz fixed point theorem and the $G$-signature theorem.
Denote by $t$ the dimension of the $1$-eigenspace of $g$ in $H^2(X;\R)$
(note that $22-t$ must be divisible by $6$), and denote by $u,v$ the
number of groups of type (1), (2) isolated fixed points of $g$ respectively.
Then by the Lefschetz fixed point theorem and the $G$-signature theorem,
$$
\left\{\begin{array}{ccc}
2+ t-(22-t)/6 & = & u+2v\\
7(6-t) & = & -16 +10 u-8v,\\
\end{array} \right .
$$
where we make use of $b_2^{+}(X/\langle g\rangle)=3$ and the fact that the total signature
defect for a group of type (1), (2)  fixed points of $g$ is $10$ and $-8$
respectively (cf. \cite{CK1}, Lemma 3.8). The solutions to the above system of equations are
$$
(t,u,v)=(4,3,0), (10,2,4), (16,1,8), (22,0,12).
$$

The cases where $(t,u,v)=(10,2,4)$ or $(16,1,8)$ can be ruled out as follows.
The maximal subgroup $D$ induces a $\Z_3$-action on the set of isolated
fixed points of $g$, which must be free because $D$ can not act freely and
linearly on $\s^3$. This implies that the number of fixed points, which is
$u+2v$, must be divisible by $3$.

In the case of $(t,u,v)=(22,0,12)$, $g$ is homologically trivial. 
Since $G=L_2(7)$ is a simple group, it follows that the action of $G$
is also homologically trivial.  But, because $G$ is nonabelian, 
this is impossible by McCooey's theorem in \cite{McC}.

The only case left is $(t,u,v)=(4,3,0)$, which shows that $g$ has exactly
$3$ isolated fixed points.

\end{proof}

Next we consider the action of an element $g\in G$ of order $3$. We claim
that $g$ has exactly $6$ isolated fixed points, with possibly some fixed
toroidal components. To see this, we note that there is an element $h\in G$
of order $7$ such that $D=\langle g,h\rangle$ is a nonabelian subgroup of order $21$,
which is the normalizer of $\langle h\rangle$ (cf. \cite{Con}). From the proof of Lemma 2.4,
we see that the dimension of the $\exp(\frac{2\pi ik}{7})$-eigenspace
of $h$ in $H^2(X;\R)$ is $\frac{22-4}{6}=3$ for each $1\leq k\leq 6$.
By examining the action of $D$ on the $\exp(\frac{2\pi ik}{7})$-eigenspaces
of $h$, $1\leq k\leq 6$,
one can check easily that the dimension of the $1$-eigenspace of $g$
in $H^2(X;\R)$ is at most $10$. By Lemma 2.3 (1), our claim follows.

Now with Lemma 2.2, which describes the number of fixed points of an
element of order $2$ or $4$, we see that for any $g\in G$, the Lefschetz
fixed point theorem implies that the trace $tr(g)$ is the same as that of a 
symplectic automorphism of order $|g|$ on a $K3$ surface. By Mukai \cite{Mu}, 
$\mu(G)=5$ for a symplectic holomorphic $G=L_2(7)$ action. This implies that
$$
\dim (H^\ast(X;\R))^G=\mu(G)\equiv \frac{1}{|G|}\sum_{g\in G} tr(g)=5.
$$
As we pointed out in the beginning of the proof of Theorem 1.0, this implies that 
$c_1(K_X)=0$, and hence $X$ is minimally exotic. 

\vspace{2mm}

End of Case (1).

\vspace{2mm}

{\it Case (2)}. $G=M_{20}$ or $A_6$. In this case we shall exploit the fact that there is
a subgroup of $G_0$ which is isomorphic to either $A_5$ or $A_6$. 

\begin{lemma}
Suppose $H\subset G_0$ is a subgroup isomorphic to either $A_5$
or $A_6$. Let $g\in H$ be an element of odd order. Then $g$ is either
pseudofree or has only toroidal fixed components. Moreover, $g$ has 
$4$ isolated fixed points if $|g|=5$, and $g$ has either $6$ or $12$ isolated 
fixed points when $|g|=3$.
\end{lemma}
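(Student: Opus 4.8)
The plan is to treat the two possible odd orders $|g|=5$ and $|g|=3$ separately, in each case combining the classification of fixed-point types from \cite{CK1} with the Lefschetz fixed point theorem and the $G$-signature theorem \cite{HZ}, and then discarding the spurious numerical solutions by exploiting the internal structure of $A_5$ and $A_6$. The small centralizers of odd-order elements in these groups prevent a direct appeal to Lemma 2.3(2),(3), so the decisive new ingredient will instead be the involution $\tau$ inverting $g$ inside the normalizer $N_H(\langle g\rangle)$: in both $A_5$ and $A_6$ this normalizer contains an even involution $\tau$ with $\tau g\tau^{-1}=g^{-1}$, and, since $\tau\in G_0$, Lemma 2.2(1) pins down its local model as $-I$.

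For $|g|=5$ I would proceed in close parallel with Lemma 2.4. First I would show that no type (B) or type (C) component of $\cup_i C_i$ can carry a fixed point of $g$ with local representation outside $SL_2(\C)$: such a component spans a negative-definite sublattice of $H_2(X;\Z)$, its $H$-orbit is long, and the constraint $b_2^{-}(X)=19$, together with the fact that the relevant subgroups (in particular $N_H(\langle g\rangle)\cong D_{10}$) cannot act freely and linearly on $\s^3$, also kills the ``three $(-2)$-spheres through a point'' configuration. Since $\Z_5$ cannot act on a $2$-torus with isolated fixed points, every invariant toroidal component is either fixed pointwise or acted on freely, so by \cite{CK1} the isolated fixed points of $g$ fall into $SL_2(\C)$ groups together with groups of two points supported on cusp or nodal spheres (the analogue of the type (2) groups of Lemma 2.4). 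The Lefschetz and $G$-signature equations then constrain the dimension $t$ of the $1$-eigenspace of $g$ on $H^2(X;\R)$ and the numbers of groups of each type; I would solve this system and eliminate the extraneous solutions using the $\Z_2$-action induced on $\text{Fix}(g)$ by $N_H(\langle g\rangle)$ (forcing parity) and McCooey's theorem \cite{McC} (the case $t=22$ would make $g$, hence by simplicity all of $H$, act homologically trivially, impossible for the nonabelian $H$). The surviving solution yields exactly $4$ isolated fixed points.

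For $|g|=3$, Lemma 2.3(1) already gives $(u,v)\in\{(6,0),(3,2),(0,4)\}$ and $w\le 6$. I would first rule out $w>0$, i.e. the presence of fixed $(-2)$-sphere components, by the same orbit/lattice argument (no relevant subgroup of $A_5$ or $A_6$ acts freely and linearly on $\s^3$, and the $H$-orbit of any type (C) component exceeds the budget imposed by $b_2^{-}(X)=19$), so that $g$ is pseudofree up to toroidal components. It then remains to exclude the mixed solution $(u,v)=(3,2)$, which would leave $u=3$ fixed points of type (I). Here $\tau$ enters: by Lemma 2.2(1) its local representation is $-I$ at each of its fixed points, so if $\tau$ fixed a point $p\in\text{Fix}(g)$ then conjugating the local model would force $\rho_g(p)=(-I)\rho_g(p)(-I)^{-1}=\rho_{g^{-1}}(p)$, hence $\rho_g(p)^2=\mathrm{id}$, contradicting $|g|=3$. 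Thus $\tau$ fixes no point of $\text{Fix}(g)$; since $\tau g\tau^{-1}=g^{-1}$ preserves the type of each fixed point, $\tau$ acts freely on the set of type (I) points, which is impossible when that set has odd cardinality $u=3$. This eliminates $(3,2)$ and leaves $(u,v)=(6,0)$ or $(0,4)$, i.e. $6$ or $12$ isolated fixed points.

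The main obstacle I anticipate is the exclusion of the ``bad'' sphere configurations — the $w>0$ case for $|g|=3$ and the type (B)/(C) components for $|g|=5$. Unlike the Lefschetz/signature bookkeeping, which is essentially forced, these demand a careful analysis of the $H$-orbits of the components of $\cup_i C_i$ against the rigid numerical constraint $b_2^{-}(X)=19$, tailored to the subgroup lattices of $A_5$ and $A_6$. By contrast, the conceptual difficulty created by the small centralizers, which blocks Lemma 2.3(2),(3), is resolved cleanly by the inverting-involution observation above.
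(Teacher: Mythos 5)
Your overall architecture coincides with the paper's own proof: the classification of fixed-point groups from \cite{CK1}, the Lefschetz/$G$-signature bookkeeping, parity of the fixed-point count via a free involution coming from the normalizer, McCooey's theorem \cite{McC} for the homologically trivial case, and orbit counts against $b_2^{-}(X)=19$. Your order-$3$ argument is in fact the paper's: type (III) points are excluded by orbit counting (the paper uses the order-$5$ analysis to kill $\tilde{D}_n,\tilde{E}_6,\tilde{E}_7,\tilde{E}_8$ components and the subgroup $K\cong S_3$ for the $\tilde{A}_2$ case), and $(u,v)=(3,2)$ is eliminated because the inverting involution acts freely on $\text{Fix}(g)$, forcing $u$ even; your local computation at a putative common fixed point (local model $-I$ from Lemma 2.2(1) conjugating $\rho_g$ to itself, hence $\rho_g=\rho_{g^{-1}}$) is a correct and more explicit justification of the paper's terse remark that $S_3$ cannot act freely and linearly on $\s^3$. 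The order-$5$ case, however, contains a genuine gap. You propose to exclude \emph{upfront} every type (B) component carrying a non-$SL_2(\C)$ fixed point of $g$ on the grounds that its ``$H$-orbit is long.'' This fails for a single such component: its stabilizer can a priori be the full normalizer $D_{10}$, which has index only $6$ in $A_5$, so the orbit may consist of as few as $6$ components, i.e.\ $12$ disjoint $(-2)$-spheres, comfortably within the budget $b_2^{-}(X)=19$. Moreover, your description of the non-$SL_2$ groups as ``two points supported on cusp or nodal spheres'' is not what \cite{CK1} gives for order $5$: each such group consists of \emph{three} points (one of type $(k,2k)$, two of type $(-k,4k)$), and, once type (C) components are excluded, these groups are carried precisely by type (B) components. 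With the correct data, the Lefschetz/$G$-signature system has solutions $(t,u,v)=(6,4,0),(10,3,2),(14,2,4),(18,1,6),(22,0,8)$; your two elimination tools dispose of the odd-count solutions (parity under the free involution from $D_{10}$) and of $(22,0,8)$ (McCooey), but neither touches $(14,2,4)$, whose fixed-point count $u+3v=14$ is even and whose $t\neq 22$. That is exactly the case your blanket upfront claim was meant to cover, and it is where your proof, as written, fails.

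The paper closes this case by reversing the order of your quantifiers: it keeps $v$ in the numerics and only then invokes the geometry. The solution $(14,2,4)$ forces \emph{four} $g$-invariant type (B) components; since each $H$-orbit of such components has length at least $6$ (the index of $D_{10}$) and contains only one or two $g$-invariant members, four invariant components force far more disjoint $(-2)$-spheres than $b_2^{-}(X)=19$ permits. So the orbit count does the job a posteriori, for $v\geq 2$, but not a priori for a single component. Your argument becomes correct if you replace the upfront exclusion of type (B) components by this counting applied to the surviving numerical solution, which then leaves $(t,u,v)=(6,4,0)$ and the asserted four isolated fixed points.
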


\begin{proof}
Suppose $g\in H$ is an element of order $5$. Without loss of generality
we may assume that $H\cong A_5$, because in the case of $H\cong A_6$,
$g$ is contained in an $A_5$-subgroup of $H$. With this understood,
the maximal subgroup of $H$ containing $g$ is a dihedral group
$D_{10}\subset H$ of index $6$ (cf. \cite{Con}). One can similarly argue,
as in the proof of Lemma 2.4, that if a type (C) component of
$\cup_i C_i$ contains a fixed point of $g$, then it must be represented by
a type $\tilde{A}_n$ graph and the fixed point is of local representation lying in 
$SL_2(\C)$.

By Proposition 3.7 in \cite{CK1}, if a type (A) component $\Lambda$ of $\cup_i C_i$
contains a fixed point of $g$, then $\Lambda$ is either a fixed toroidal component, or
$\Lambda$ is a cusp or nodal sphere containing only fixed points of $g$ of local 
representation lying in $SL_2(\C)$. If a type (B) component $\Lambda$ 
contains a fixed point of $g$, then $\Lambda$ contains three fixed points of $g$, 
one with local representation of type $(k,2k)$ and the other two of type $(-k,4k)$ for 
some $k\neq 0 \mod{5}$. 

In conclusion, $g$ has only toroidal fixed components and the isolated fixed points of 
$g$ can be divided into groups of the following two types:
\begin{itemize}
\item [{(1)}] One fixed point with local representation in $SL_2(\C)$.
\item [{(2)}] Three fixed points, one with local representation of type
$(k,2k)$ and the other two of type $(-k,4k)$ for some $k\neq 0 \mod{5}$.
\end{itemize}

Denote by $t$ the dimension of the $1$-eigenspace of $g$ in $H^2(X;\R)$
(note that $22-t$ must be divisible by $4$), and denote by $u,v$ the number
of groups of type (1), (2) isolated fixed points of $g$ respectively. Then
by the Lefschetz fixed point theorem and the $G$-signature theorem,
$$
\left\{\begin{array}{ccc}
2+ t-(22-t)/4 & = & u+3v\\
5(6-t) & = & -16 + 4u-8v,\\
\end{array} \right .
$$
where we make use of $b_2^{+}(X/\langle g\rangle)=3$ and the fact that the total signature
defect for a group of type (1), (2)  fixed points is $4$ and $-8$
respectively (cf. \cite{CK1}, Lemma 3.8). The solutions to the above
system of equations are
$$
(t,u,v)=(6,4,0), (10,3,2), (14,2,4), (18,1,6), (22,0,8).
$$

The cases where $u=1$ or $3$ can be eliminated as follows. There is an
involution on the set of isolated fixed points of $g$ induced by the action
of $D_{10}$, which is free because $D_{10}$ can not act freely and linearly
on $\s^3$. Consequently, the number of isolated fixed points of $g$ must be
divisible by $2$. To eliminate the case where $(t,u,v)=(14,2,4)$, note that
in this case $\cup_i C_i$ has $4$ type (B) components each of which contains
a fixed point of $g$. Moreover, it is easy to see that each component is not
invariant under the action of $D_{10}$. Since the index of $D_{10}\subset H$ is $6$,
there are at least $4\times 2 \times 6=48$ type (B) components of $\cup_i C_i$, which 
contradicts $b_2^{-}(X)=19$. Finally, the case where $(t,u,v)=(22,0,8)$ is ruled out by 
McCooey's theorem \cite{McC} because $H$ is simple and nonabelian. Hence $g$ 
has $4$ isolated fixed points when $|g|=5$.

Next suppose $g\in H$ is an element of order $3$, where $H$ is either
$A_5$ or $A_6$. We claim that $\text{Fix}(g)$ does not contain any group
of type (III) fixed points (i.e., $w=0$ in Lemma 2.3). To see this, note first
that there are no type (C) components of $\cup_i C_i$ which are represented 
by a graph of type $\tilde{D}_n$, $\tilde{E}_6$, $\tilde{E}_7$ or $\tilde{E}_8$.  
The point is that such a component can not contain any fixed points of an order-$5$
element of $H$, hence can not be invariant under the action of an order-$5$ element. 
If such a component exists, then 
there are at least $5$ such components in $\cup_i C_i$, and this contradicts 
$b_2^{-}(X)=19$. Hence if $\text{Fix}(g)$ contains a group of type (III) fixed points, 
it must come from a type (C) component $\Lambda$ which is represented by a
type $\tilde{A}_n$ graph, where $n=-1\mod{3}$ (cf. Proposition 3.7 in \cite{CK1}).
Let $h\in H$ be an order-$5$ element. Since $g\neq hgh^{-1}$, it follows easily 
that $h$ and $g$ can not have a common isolated fixed point in $\Lambda$, which 
implies that either $\Lambda$ is not invariant under $h$ or $h$ acts freely on $\Lambda$.
In any event, the case where $n>2$ can be ruled out by using the fact
$b_2^{-}(X)=19$. To eliminate the case where $n=2$, we note that there is
a subgroup $K\subset H$ which is isomorphic to the symmetric group $S_3$
and contains $\langle g\rangle$ as a normal subgroup. Clearly $K$ can not leave
$\Lambda$ invariant if it is represented by a $\tilde{A}_2$ graph, so
that $\Lambda$ must come in pairs. Again this is impossible by the fact
that $b_2^{-}(X)=19$. Hence $\text{Fix}(g)$ does not contain any group
of type (III) fixed points. The action of $K$ on $\text{Fix}(g)$ also
implies that $u$ is even in Lemma 2.3 (because $S_3$ can not act freely
and linearly on $\s^3$). Hence $g$ has either $6$ or $12$ isolated fixed
points when $|g|=3$.

Finally, note that $g$ is either pseudofree or has only toroidal fixed components.

\end{proof}

Let $G=M_{20}$. Since $[G,G]=G$, we see that $G_0=G$. We claim that
for each $g\in G$ the trace $tr(g)$ on $H^\ast(X;\R)$ is the same as
that of a symplectic automorphism of order $|g|$ on a $K3$ surface.
With this the proof of Theorem 1.0 proceeds identically as in the
case of $L_2(7)$, as for $G=M_{20}$,
$\mu(G)\equiv |G|^{-1}\sum_{g\in G}tr(g)=5$
is also true for a symplectic holomorphic action (cf. \cite{Mu}). When $|g|\neq 3$
or $6$, our claim follows readily from Lemma 2.2 and Lemma 2.5. For
the case where $|g|=3$ or $6$, we need to argue with some extra information
about the structure of $G=M_{20}$.

According to Mukai \cite{Mu}, page 189, $M_{20}=2^4 A_5$, where the action
of $A_5$ on $2^4$ is obtained by realizing $2^4$ as the hypersurface
$V=\{(a_i)|\sum_{i=1}^5 a_i=0\}\subset (\Z_2)^5$ with $A_5$ acting
as permutations
of the $5$ coordinates. Clearly, for each element $g$ of order $3$ in $A_5$,
there are $3$ nonzero elements of $V$ which are fixed under $g$. This gives
$3$ distinct involutions in $G$, each of which commutes with $g$.
By Lemma 2.3 (2), $g$ has $6$ isolated fixed points. It also follows easily
from the proof of Lemma 2.3 (2) that an order $6$ element of $G$ has $2$
isolated fixed points, with possibly some fixed toroidal components. In
conclusion, for an order $3$ or $6$ element $g\in G$, the trace $tr(g)$ on
$H^\ast(X;\R)$ is also the same as that of a symplectic automorphism on a
$K3$ surface of the same order. This completes the proof for the case where
$G=M_{20}$.

Let $G=A_6$. In this case, we also have $G_0=G$. As in the case of $M_{20}$, 
it suffices to show that for each $g\in G$ with $|g|=3$, there are $6$ isolated fixed
points. (Note that $\mu(G)=5$ is also true for a symplectic holomorphic $A_6$-action 
(cf. \cite{Mu})). To this end, we recall the following fact about $A_6$: There are $2$ 
conjugacy classes of elements of order $3$ in $A_6$; the centralizer of each order $3$ 
element in $A_6$ is isomorphic to $(\Z_3)^2$, hence has order $9$. Now suppose an 
element $g$ of order $3$ in $G=A_6$ has, instead, $12$ isolated fixed points.
Then the conjugacy class of $g$ will make an increase of $\frac{6}{9}=\frac{2}{3}$ to
$$
\mu(G)\equiv \frac{1}{|G|}\sum_{g\in G}tr(g)
$$
when compared with a holomorphic $A_6$-action. Since there are only two
conjugacy classes of elements of order $3$ in $A_6$, a nonzero increase
to $\mu(G)$ is either $\frac{2}{3}$ or $\frac{4}{3}$, neither of which is
integral. This shows that an element of order $3$ in $G$ must have $6$
isolated fixed points, and the proof of Theorem 1.0 for the case
of $G=A_6$ follows.

\vspace{2mm}

End of Case (2) where $G=M_{20}$ or $A_6$.

\vspace{2mm}

{\it Case (3)}. $G=A_{4,4}$.  Let $H\equiv [G,G]=A_4\times A_4$.
Then since $[G,G]\subset G_0$, we have $b_2^{+}(X/H)=3$. Note that
$\mu(H)=5$ for a symplectic automorphism group $H$ of a $K3$ surface (cf. \cite{Xiao}).
Hence by Lemma 2.2, it suffices to show that for each $g\in H$ of order
$3$, the trace $tr(g)$ on $H^\ast(X;\R)$ is the same as that of a
symplectic automorphism of order $3$ on a $K3$ surface.

There are $4$ conjugacy classes of order $3$ elements
in $G$, which are represented by
$(g,1),(1,g),(g,g),(g,g^2)\in A_4\times A_4=H$ for some
fixed element $g\in A_4$ of order $3$. Since the trace on $H^\ast(X;\R)$
only depends on the conjugacy class in $G$, it suffices to examine these
$4$ elements of $H$.

We first show that there are no type (III) fixed points (i.e., $w=0$ in
Lemma 2.3). Consider the case $(g,1)$ first. The normalizer of $\langle (g,1)\rangle$
in $H$ is $\langle g\rangle\times A_4$ which has index $4$. If $\Lambda$ is a type (C)
component of $\cup_i C_i$ which contains a group of type (III) fixed points
of $(g,1)$, then the fact $b_2^{-}(X)=19$ immediately rules out the
possibility that $\Lambda$ is represented by a $\tilde{E}_7$ graph or
a $\tilde{A}_n$ graph where $n\neq 2$. If $\Lambda$ is represented by a
$\tilde{D}_n$ graph or a $\tilde{A}_2$ graph, then one can check easily
that the orbit of $\Lambda$ under the normalizer $\langle g\rangle\times A_4$ has
at least $3$ components. This also
contradicts $b_2^{-}(X)=19$, and hence there are no type (III) fixed points
of $(g,1)$. The case of $(1,g)$ is completely parallel.
For the case of $(g,g)$ or $(g,g^2)$, the normalizer of $\langle (g,g)\rangle$ or
$\langle (g,g^2)\rangle$ in $H$ is $\langle g\rangle\times \langle g\rangle$ which has index 
$16$. It follows immediately from $b_2^{-}(X)=19$ that there are no type (III) fixed points.

Now by Lemma 2.3 (2), each of $(g,1)$ and $(1,g)$ has exactly $6$ isolated
fixed points. The case of $(g,g)$ or $(g,g^2)$ is more involved, which is
addressed in the following

\begin{lemma}
Suppose $c_1(K_X)\neq 0$. Then the number of isolated fixed points of
$(g,g)$ or $(g,g^2)$ is even.
\end{lemma}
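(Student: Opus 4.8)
The plan is to turn the evenness claim into the exclusion of a single fixed-point configuration and then to destroy that configuration with a Spin-number argument reinforced by a commuting involution. By Lemma 2.3 (1), together with the vanishing $w=0$ already established for $(g,g)$ and $(g,g^2)$ in the preceding discussion, the number of isolated fixed points of such an element equals $u+3v$ with $(u,v)\in\{(6,0),(3,2),(0,4)\}$, i.e. it is $6$, $9$, or $12$. Since $6$ and $12$ are even while $9$ is odd, proving that the number of isolated fixed points is even is \emph{equivalent} to ruling out the case $(u,v)=(3,2)$, in which the type (II) points sit on two $\gamma$-invariant tori among the $C_i$.

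First I would compute the Spin-number of $\gamma=(g,g)$ (and likewise $(g,g^2)$) exactly as in the proof of Lemma 2.2. Writing $\zeta=\exp(2\pi i/3)$, every isolated fixed point is either of type (I), with local representation $(1,2)\in SL_2(\C)$, or of type (II), with local representation $(k,k)$, $k\neq 0\bmod 3$; in either case $\csc(\tfrac{\pi}{3})\csc(\tfrac{2\pi}{3})=\csc^{2}(\tfrac{k\pi}{3})=\tfrac43$, so the digression's formula gives a local contribution $I_m=\pm\tfrac13$. Hence $Spin(\gamma,X)=\tfrac13(N_{+}-N_{-})$, where $N_{\pm}$ count the points with $I_m=\pm\tfrac13$ and $N=N_{+}+N_{-}$ is the total. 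On the other hand the quaternionic structure of the Dirac index forces $\text{index}_\gamma D=d_0+d_1(\C_1\oplus\C_2)$ with $d_0$ even and $d_0+2d_1=\text{index}\,D=2$; thus $(d_0,d_1)\in\{(2,0),(0,1)\}$ and $Spin(\gamma,X)=d_0-d_1\in\{2,-1\}$. Comparing the two expressions, $Spin(\gamma,X)=2$ gives $N_{+}-N_{-}=6$ and $N$ even, while $Spin(\gamma,X)=-1$ gives $N_{+}-N_{-}=-3$ and forces $N=9$. So the lemma is equivalent to the single assertion $Spin(\gamma,X)=2$, and everything reduces to excluding $Spin(\gamma,X)=-1$.

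To exclude it I would exploit the involution $\sigma\in G=A_{4,4}$ interchanging the two $A_4$-factors, which centralizes $\gamma=(g,g)$ (and conjugates $(g,g^2)$ to its inverse, so the same count applies). Under the standing hypothesis $c_1(K_X)\neq 0$ we are in the genuinely exotic regime: the canonical class is carried by $\cup_i C_i$, the two invariant tori of the hypothetical $(3,2)$ configuration actually exist inside $\cup_i C_i$, and $\sigma\notin G_0=[G,G]$, so by Lemma 2.2 (1) $\text{Fix}(\sigma)$ has no isolated points—only $J$-holomorphic curves $\{\Sigma_j\}$ with $c_1(K_X)\cdot\Sigma_j=0$. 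Since $\sigma$ commutes with $\gamma$, it acts on the finite set of $N$ isolated $\gamma$-fixed points, and $N$ is even precisely when the number of $\sigma$-fixed ones—namely the isolated $\gamma$-fixed points lying on $\text{Fix}(\sigma)$—is even. The plan is to count these on each $\gamma$-invariant component of $\text{Fix}(\sigma)$ and to translate the result into the spin weights $r_m$, thereby pinning $Spin(\gamma,X)$ to $2$ by the same mechanism used at the end of Lemma 2.2, where one shows the weights cannot realize the unbalanced signed count $N_{+}-N_{-}=-3$.

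The hard part will be precisely this final sign-and-parity bookkeeping. Controlling the contributions $I_m$ on the two invariant tori requires a careful description of the $\gamma$-action on each torus (an order-$3$ rotation with three fixed points of type $(k,k)$) together with the $\sigma$-action permuting the tori and the three type (I) points, and then reconciling the resulting weight pattern with the dichotomy $Spin(\gamma,X)\in\{2,-1\}$. This is exactly where $c_1(K_X)\neq 0$ is indispensable: it is the regime in which $N=9$ is \emph{not} excluded automatically (there the genuine-spin index and the canonical $Spin^{\C}$ index, which differ by the twist along $c_1(K_X)$, can fall into the two different branches), and it simultaneously supplies the curves $C_i$—hence the invariant tori and the curve structure of $\text{Fix}(\sigma)$—on which the entire count is carried out.
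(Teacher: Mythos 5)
Your reduction is correct and agrees with the paper's starting point: by Lemma 2.3 with $w=0$, the count is $6$, $9$, or $12$, so the lemma amounts to excluding $(u,v)=(3,2)$. But your proof stops exactly where the real work begins. You reformulate the exclusion as $Spin(\gamma,X)\neq -1$ and then explicitly defer the ``sign-and-parity bookkeeping,'' which is the entire content of the lemma. Note that the mechanism that closed the analogous argument in Lemma 2.2 is not available here: for an involution, $d_0+d_1=2$ with \emph{both} $d_0,d_1$ even ruled out the middle value for free, whereas for order $3$ the constraint $d_0+2d_1=2$ with $d_0$ even happily admits $(d_0,d_1)=(0,1)$, i.e.\ $Spin(\gamma,X)=-1$ is perfectly consistent with the index-theoretic data you assemble (you would also need to control the contribution of type (IV) toroidal fixed components to the equivariant spin index, which you do not address). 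So the proposal is a plan with its decisive step missing, not a proof.

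Moreover, the group-theoretic pivot of your plan does not exist. In Mukai's model, $A_{4,4}=\{(x,y)\in S_4\times S_4 \mid \mathrm{sgn}(x)=\mathrm{sgn}(y)\}$ is a \emph{subgroup} of $S_4\times S_4$: there is no involution interchanging the two factors, and in fact no involution of $G$ centralizes $(g,g)$ at all, since the centralizer of a $3$-cycle in $S_4$ is just $\langle g\rangle$. The involutions in $G\setminus H$ are pairs of transpositions $(\tau_1,\tau_2)$, and the useful ones \emph{invert} $(g,g)$, generating $K\cong S_3$. This is precisely what the paper uses, but toward a completely different argument: assuming $9$ fixed points, $K$ fixes one of the three type (I) points $p$; the three conjugate involutions $h$, $ghg^{-1}$, $g^2hg^{-2}$ have $J$-holomorphic fixed curves $\Gamma_0,\Gamma_1,\Gamma_2$ through $p$ with $c_1(K_X)\cdot\Gamma_k=0$; a light-cone/Cauchy--Schwarz argument in $H^2$ (this, not any spin versus $Spin^{\C}$ twist, is where $c_1(K_X)\neq 0$ enters) forces each $\Gamma_k$ to be a $(-2)$-sphere and $\sum_k\Gamma_k=\lambda c_1(K_X)$; translating the configuration by the $15$ involutions of $H$ then yields $16$ mutually disjoint such configurations, contradicting $b_2^-(X)=19$. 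So your approach both rests on a nonexistent element and leaves its core step unexecuted; the paper's proof is geometric and lattice-theoretic rather than index-theoretic, and there is no evidence in your sketch that the Spin-number route can be completed.
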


\begin{proof}
We consider the case of $(g,g)$ only. The case of $(g,g^2)$ is completely
parallel.

By Lemma 2.3, the number of isolated fixed points of $(g,g)$ is either
$6$, $9$ or $12$. Suppose to the contrary that it is $9$. A contradiction
is derived as follows. Observe that there is an involution
$h\in G\setminus H$ such that $h$ and $(g,g)$ generate a subgroup $K$ of $G$,
where $K$ is isomorphic to $S_3$ and $\langle (g,g)\rangle$ is a normal subgroup of $K$.
There is an induced action of $K$ on $\text{Fix}((g,g))$, which preserves
the type of the fixed points. Since $(g,g)$ has $3$ type (I) fixed points,
one of them, denoted by $p$, must be fixed by $K$. Note that $K\cong S_3$
can not have an isolated fixed point, hence $h\in G\setminus G_0$ and
$\text{Fix}(h)$ consists of a disjoint union of embedded $J$-holomorphic
curves $\{\Sigma_j\}$ where $c_1(K_X)\cdot\Sigma_j=0$ for each $j$
(cf. Lemma 2.2 (1)). It follows easily that there are fixed components
$\Gamma_0,\Gamma_1,\Gamma_2$ of the three involutions $h$,
$(g,g)h(g^{-1},g^{-1})$, $(g^2,g^2)h(g^{-2},g^{-2})$ of $K$ respectively,
which intersect transversely at $p$ and have the same genus and
self-intersection. We claim that
$\Gamma_0,\Gamma_1,\Gamma_2$ are $(-2)$-spheres, and consequently
$(\sum_{k=0}^2\Gamma_k)^2=0$. To see that each $\Gamma_k$ is a $(-2)$-sphere,
it suffices to show that $\Gamma_k^2<0$ because $c_1(K_X)\cdot\Gamma_k=0$.
Suppose to the contrary that $\Gamma_k^2\geq 0$. Then
$(\sum_{k=0}^2\Gamma_k)^2>0$, which we will show is impossible
when $c_1(K_X)\neq 0$.
To see this, note that all three classes $\sum_{k=0}^2\Gamma_k$, $c_1(K_X)$,
and the symplectic structure $\omega$ are fixed under $K$. Since
$b_2^{+}(X/K)=1$, we may write
$$
\sum_{k=0}^2\Gamma_k=a_1\omega+\alpha_1, \;\; c_1(K_X)=a_2\omega+\alpha_2
$$
for some $a_1,a_2\in\R^{+}$ and $\alpha_1,\alpha_2\in H^2(X;\R)$ such that
$\alpha_i\cdot\omega=0$ and $\alpha_i^2\leq 0$ for $i=1,2$.
Without loss of generality we assume
that $\omega^2=1$. Then $(\sum_{k=0}^2\Gamma_k)^2>0$, $c_1(K_X)^2=0$,
and $c_1(K_X)\cdot\sum_{k=0}^2\Sigma_k=0$ give rise to
$$
a_1^2+\alpha_1^2>0, \; a_2^2+\alpha_2^2=0, \mbox{ and }
a_1a_2+\alpha_1\cdot\alpha_2=0.
$$
We arrive at a contradiction to the triangle inequality
$$
|\alpha_1\cdot\alpha_2|=a_1a_2>(\alpha_1^2\cdot\alpha_2^2)^{1/2}.
$$
Hence $\Gamma_0,\Gamma_1,\Gamma_2$ are $(-2)$-spheres and 
$(\sum_{k=0}^2\Gamma_k)^2=0$.

We claim that $\sum_{k=0}^2\Gamma_k=\lambda c_1(K_X)$ for some $\lambda>0$.
To see this, let $H^{+}$ be the space of self-dual harmonic $2$-forms.
Then since $b_2^{+}(X/K)=1$, the projections of the classes of
$\sum_{k=0}^2\Gamma_k$ and $c_1(K_X)$ into $H^{+}$ are linearly dependent.
On the other hand, $\sum_{k=0}^2\Gamma_k$ and $c_1(K_X)$ span an isotropic
subspace because
$$
(\sum_{k=0}^2\Gamma_k)^2=c_1(K_X)^2=c_1(K_X)\cdot\sum_{k=0}^2\Gamma_k=0,
$$
so that their projections into $H^{+}$ are injective. This proves the claim.

Now for each involution $h^\prime\in H$, the set
$h^\prime (\cup_{k=0}^2\Gamma_k)$ is disjoint from $\cup_{k=0}^2\Gamma_k$
because of positivity of intersection of $J$-holomorphic curves and
because
$$
(h^\prime)^\ast(\sum_{k=0}^2\Gamma_k)\cdot (\sum_{k=0}^2\Gamma_k)
=\lambda^2 (h^\prime)^\ast c_1(K_X)\cdot c_1(K_X)=\lambda^2 c_1(K_X)^2=0.
$$
Since there are $15$ distinct involutions in $H$, there must be $16$
such configurations as $\cup_{k=0}^2\Gamma_k$ which are mutually disjoint.
This certainly contradicts $b_2^{-}(X)=19$, and the lemma follows.
\end{proof}

If $c_1(K_X)=0$, then $X$ is already minimally exotic and we are done in
this case. Suppose $c_1(K_X)\neq 0$, then
with Lemma 2.6, we shall further argue that each of $(g,g)$ or
$(g,g^2)$ must have $6$ isolated fixed points. The reason is that if not,
there will be an increase to $\mu(H)\equiv |H|^{-1}\sum_{g\in H}tr(g)$, in
comparison with a symplectic automorphism group $H$ of a $K3$ surface,
of either $2\times \frac{6}{9}$ or $4\times \frac{6}{9}$, both of which are
not integral. (The centralizer of $(g,g)$ or $(g,g^2)$ is $\langle g\rangle\times \langle g\rangle$
which has order $9$, and $(g,g)$, $(g^2,g^2)$, and $(g,g^2)$, $(g^2,g)$
are not conjugate in $H$ even though each pair of them are conjugate in $G$.)
The proof for the case of $G=A_{4,4}$ is then completed.

\vspace{2mm}

End of Case (3) where $G=A_{4,4}$.

\vspace{2mm}

{\it Case (4)}. $G=T_{192}$ or $T_{48}$. Set $H\equiv [G,G]\subset G_0$.
Then in both cases, $\mu(H)=5$ for a symplectic automorphism group $H$ of 
a $K3$ surface (cf. \cite{Xiao}).

Let $G=T_{192}$. In this case $H=(Q_8\ast Q_8)\times_\phi\Z_3$, where
$$
Q_8\ast Q_8=Q_8\times Q_8/\langle (-1,-1)\rangle
$$
is the central product of $Q_8$ with itself, and the action of $\Z_3$
on $Q_8\ast Q_8$ is given by
$\phi: x\ast y\mapsto \alpha^{-1}(x)\ast\alpha(y)$ for some fixed order-$3$
automorphism $\alpha$ of $Q_8$ (cf. \cite{Mu}). The normalizer of
$\Z_3$ in $H$ is $\langle -1\rangle\times \Z_3$, where $\langle -1\rangle$ denotes the center of
$Q_8\ast Q_8$. It follows easily that for each $g\in\Z_3$, there are no
type (III) fixed points of $g$ because $b_2^{-}(X)=19$ and the index
of $\langle -1\rangle\times \Z_3$ in $H$ is $16$. By Lemma 2.3 (3), each order-$3$
element of $H$ has $6$ isolated fixed points, with possibly some fixed
toroidal components. Hence the case where $G=T_{192}$ follows.

Let $G=T_{48}$. Then $H$ is isomorphic to $T_{24}=Q_8\times_\phi\Z_3$.
By Lemma 2.3 (3), one only needs to verify that for any nontrivial element
$g\in\Z_3$, there are no groups of type (III) fixed points of $g$.

Suppose to the contrary that there is a group of type (III) fixed points,
which is contained in a type (C) component $\Lambda$. We observe that the
normalizer of $\Z_3$ in $H$ is $\langle -1\rangle\times \Z_3$ which has index $4$.
It follows immediately from $b_2^{-}(X)=19$ that $\Lambda$ is not
represented by a $\tilde{E}_7$ graph, or a $\tilde{D}_n$ graph with
$n>4$, or a $\tilde{A}_n$ graph with $n>2$. In fact $\Lambda$ is not
represented by a $\tilde{D}_4$ graph either. To see this, suppose $\Lambda$ is
of type $\tilde{D}_4$.  If $\Lambda$ is invariant under $-1\in Q_8$, then the $(-2)$-sphere 
represented by the central vertex of $\Lambda$ must contain $2$ fixed points of $-1$. 
On the other hand, by Lemma 2.2, $Q_8$ has at least $1$ fixed point (cf. e.g. \cite{Xiao}).
Since $-1\in Q_8$ has only $8$ isolated fixed points, it follows that there must be an order-$4$
element of $Q_8$ which also fixes the central vertex of the $\tilde{D}_4$ graph. However,
this would imply that the whole group $Q_8$ fixes the central vertex, and there is an induced
effective action of $Q_8$ on $\s^2$, which is a contradiction. If $\Lambda$ is 
not invariant under $-1\in Q_8$, then the orbit of $\Lambda$ under $H$ has at least 
$8$ components, contradicting $b_2^{-}(X)=19$. Hence $\Lambda$ is not represented 
by a $\tilde{D}_4$ graph.

It remains to eliminate the possibility that $\Lambda$ is represented by
a $\tilde{A}_2$ graph. Suppose this is the case. Then by the same argument
as above, $\Lambda$ can not be invariant under $-1\in Q_8$, which means that
$\Lambda$ comes in pairs. Furthermore, the constraint $b_2^{-}(X)=19$ allows
for exactly two $\tilde{A}_2$ components, which give $2$ groups of type (III)
fixed points of $g$. To eliminate this possibility, we make use of the fact
that there is an involution $h\in G\setminus H$, such that $hgh^{-1}=g^{-1}$.
There is an induced action of $h$ on the set of type (III) fixed points of $g$,
where by replacing $h$ with $(-1)h$, we may assume that $h$ fixes
the isolated fixed point in each of the $2$ groups of type (III) fixed
points. Since the local representation of $g$ at the fixed point is of type
$(1,1)$ or $(2,2)$, it follows that at the fixed point one has the
commutativity relation $hg=gh$, which contradicts the fact that $hgh^{-1}=g^{-1}$.
This finishes the proof that there are no groups of type (III) fixed points,
and the case where $G=T_{48}$ follows.

\section{Proof of Theorem 1.1}

In the proof of Theorem 1.1, the determination of the structure and the action
of the subgroup $G_0$ follows the strategy of Xiao \cite{Xiao}. However,
it relies on the fundamental work of Taubes \cite{T} to establish the
necessary properties of the action of $G$ in order to implement Xiao's
strategy.

The first half of Theorem 1.1 is contained in the following

\begin{proposition}
Let $X$ be a minimally exotic symplectic homotopy $K3$ surface, and let $G$
be a finite group acting on $X$ effectively and symplectically. Then there
exists a short exact sequence of finite groups
$$
1\rightarrow G_0\rightarrow G\rightarrow G^0\rightarrow 1,
$$
where $G^0$ is cyclic and $G_0$ is characterized as the maximal subgroup 
of $G$ with property $b_2^{+}(X/G_0)=3$.  Moreover, for each
$g\in G_0$ the action of $g$ is pseudofree with local representation
at a fixed point contained in $SL_2(\C)$, and the quotient orbifold
$X/G_0$ can be smoothly resolved into a minimally exotic symplectic 
homotopy $K3$ surface.
\end{proposition}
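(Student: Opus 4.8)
The plan is to treat the short exact sequence and the ``Moreover'' clause separately, exploiting the hypothesis $c_1(K_X)=0$ (standardness) at every turn.

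\textbf{The exact sequence.} First I would observe that the proof of Lemma 2.1 never used that $G$ is a maximal symplectic $K3$ group: it only used that $g_J$ is $G$-equivariant, that $\omega\in H^{+}$ is $G$-fixed, that $\dim H^{+}=3$, and that $b_2^{+}(X/G)$ is odd (the latter via the index computation for the canonical $Spin^\C$ structure on the symplectic orbifold $X/G$, which needs only $b_1(X/G)=0$, itself a consequence of $X$ being simply connected). All of these persist for an arbitrary finite group $G$ acting symplectically. Hence, letting $G_0$ be the maximal subgroup with $b_2^{+}(X/G_0)=3$, the action of $G$ on $\langle\omega\rangle^{\perp}\subset H^{+}$ induces a homomorphism $G\to\s^1$ whose kernel is exactly $G_0$ (the subgroup fixing all of $H^{+}$) and whose image $G^0=G/G_0$ is a finite subgroup of $\s^1$, hence cyclic. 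This is the desired sequence.

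\textbf{Pseudofreeness.} Next I would feed $c_1(K_X)=0$ into the structural input from \cite{CK1}. Since $b_2^{+}(X/G_0)=3\geq 2$, that result represents $c_1(K_X)$ by $\sum_i n_iC_i$ with $n_i\geq 1$ and $\{C_i\}$ a $G_0$-invariant finite set of $J$-holomorphic curves, every fixed point of any $g\in G_0$ lying outside $\cup_iC_i$ being isolated with local representation in $SL_2(\C)$. As each $C_i$ is $J$-holomorphic it has positive area $\int_{C_i}\omega>0$, so a nonempty collection would give $\omega\cdot c_1(K_X)=\sum_i n_i\int_{C_i}\omega>0$, contradicting $c_1(K_X)=0$. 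Thus $\{C_i\}$ is empty, and condition (2) now applies at \emph{every} fixed point of \emph{every} $g\in G_0$: the $G_0$-action is pseudofree with all local representations in $SL_2(\C)$.

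\textbf{The resolution and its standardness.} It follows that $X/G_0$ is a symplectic $4$-orbifold whose isolated singularities are germs $\C^2/\Gamma$, with $\Gamma\subset SL_2(\C)$ finite, i.e. rational double points of ADE type; these carry a minimal symplectic resolution $\pi:\tilde Y\to X/G_0$ replacing each point by an ADE configuration of symplectic $(-2)$-spheres. Since each such sphere $E$ satisfies $c_1(K_{\tilde Y})\cdot E=0$ by adjunction, and the orbifold canonical class descends from $c_1(K_X)=0$ (the $SL_2(\C)$-action preserves the local holomorphic volume form, so the resolution is crepant), one gets $c_1(K_{\tilde Y})=0$. To identify $\tilde Y$ as standard I would argue simple connectivity: $X$ is simply connected and $\text{Fix}$ is codimension $4$, so $X\setminus\text{Fix}$ is simply connected, whence by Armstrong's theorem $\pi_1$ of the underlying space $|X/G_0|$ equals $G_0/N$, with $N$ the normal subgroup generated by elements having fixed points. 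The key point, and what I expect to be the main obstacle, is that every nontrivial $g\in G_0$ has a nonempty fixed set; I would establish this from the equivariant index of the canonical Dirac operator: since $X$ is standard and $g$ fixes $\omega$ together with the self-dual harmonic forms, this index equals its non-equivariant value $-\text{sign}(X)/8=2\neq 0$, which by the fixed point formula forces $\text{Fix}(g)\neq\emptyset$. Then $N=G_0$, so $|X/G_0|$ is simply connected, and filling in the simply connected ADE resolutions keeps $\tilde Y$ simply connected. With $c_1(K_{\tilde Y})=0$ the manifold $\tilde Y$ is minimal of Kodaira dimension $0$, and being simply connected and symplectic it is a standard homotopy $K3$ surface by the characterization of \cite{Li, T}. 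The two steps requiring the most care are this nonemptiness-of-fixed-sets input and the verification that the ADE resolution can be performed symplectically with vanishing canonical class.
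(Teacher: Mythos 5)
Your first two steps are sound, and they genuinely diverge from the paper: the exact sequence via the $H^{+}$-representation is exactly Lemma 2.1's argument (which indeed uses nothing about $G$ beyond a symplectic action), and deriving pseudofreeness by feeding $c_1(K_X)=0$ into the \cite{CK1} representation $c_1(K_X)=\sum_i n_iC_i$ and the positivity of $\int_{C_i}\omega$ is clean. The paper instead gets both at once from Taubes' large-$r$ asymptotics: the perturbed equations yield a canonical pair $(\alpha_0,a_0)$ with $\alpha_0$ a nowhere-vanishing section of $K_X$, \emph{unique up to gauge}, whence $g^{\ast}\alpha_0=\phi(g)\alpha_0$ with $\phi(g)$ a constant, and $G_0\equiv\ker(\rho:g\mapsto\phi(g))$. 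That definition does double duty, and this is where your proposal has its genuine gap: your claim that ``the orbifold canonical class descends from $c_1(K_X)=0$'' is not justified by local $SL_2(\C)$-representations. Triviality of $K_X$ gives a nowhere-zero section $s$, but not a $G_0$-\emph{invariant} one: writing $g^{\ast}s=f_g\,s$ gives a cocycle whose class in $H^1\bigl(G_0,C^{\infty}(X,\C^{\ast})\bigr)\cong H^2(G_0;\Z)$, a character of $G_0$, obstructs descent, and the $SL_2(\C)$ condition is pointwise information at fixed points, not a global trivialization of the orbifold canonical bundle. Killing this character is precisely the analytic content of the paper's Proposition 3.1 (uniqueness of $(\alpha_0,a_0)$ up to gauge), and the invariant section $\hat{\alpha_0}$ is then what trivializes $K_U$ in the Mayer--Vietoris argument of Lemma 3.2. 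Your route can be repaired — e.g.\ once $\mathrm{Fix}(g)\neq\emptyset$ for all $g\in G_0$ is known, evaluation at a fixed point (where $dg\in SU(2)$ acts trivially on the fiber of $K_X$) kills the restriction of the obstruction to each cyclic subgroup; or, after simple connectivity, one can note $c_1(K_{\tilde Y})$ restricts to a torsion class off the exceptional set and is orthogonal to the negative-definite exceptional lattice, hence vanishes — but none of this is in your writeup, and the parenthetical ``crepant'' reasoning as stated does not close it.

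The second defect is your mechanism for $\mathrm{Fix}(g)\neq\emptyset$. The equivariant index at $g\neq 1$ is a character value computed from fixed-point data; if $g$ acted freely it would equal $0$ no matter that the ordinary index is $-\mathrm{sign}(X)/8=2$ — the two are \emph{not} equal, which is the whole point of the fixed point formula, so ``this index equals its non-equivariant value'' is backwards. A correct elementary substitute: a free $g\in G_0$ acts trivially on $H^{+}$, so the Lefschetz theorem gives $\mathrm{tr}_{H^2}(g)=-2$, hence $\mathrm{tr}_{H^{-}}(g)=-5$ and $\mathrm{sign}(g,X)=3-(-5)=8\neq 0$, contradicting the $G$-signature theorem for a free action. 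It is worth noting that the paper avoids fixed-point nonemptiness altogether: Lemma 3.2 shows $X_H$ is a rational homology $K3$ from triviality of $K_{X_H}$, Rohlin's theorem and $c_1^2=0$, and then deduces $\pi_1(X_H)=1$ from Tian-Jun Li's bounds $b_2^{+}\leq 3$, $b_2^{-}\leq 19$ applied to the universal cover — and Remark 3.3 explicitly flags your $\mathrm{Fix}(g)\neq\emptyset$/Armstrong alternative as viable but requiring a lengthy $G$-index analysis. So your architecture is defensible, but the two steps you yourself flagged as delicate are exactly where the paper's Taubes-theoretic and Li-theoretic inputs do work your sketch has not replaced.
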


\begin{proof}
Let $\omega$ be a symplectic structure on $X$ which is preserved under
$G$, and we fix an $\omega$-compatible, $G$-equivariant almost complex
structure $J$ on $X$. Let $K_X$ be the canonical bundle with the choice
of $J$, and let $g_J$ be the associated Riemannian metric, both of which
are $G$-equivariant.

Following  Taubes \cite{T}, we consider the following family (parametrized by
$r>0$) of perturbed Seiberg-Witten equations
$$
D_A\psi=0 \mbox{ and } P_{+} F_A=\frac{1}{4}\tau(\psi\otimes\psi^\ast)+\mu,
$$
where $\psi=\sqrt{r}(\alpha,\beta)\in \Gamma(K_X\oplus\I)$, $A$ is a
$U(1)$-connection on $K_X$, and
$$
\mu=-\frac{ir}{4}\omega+ P_{+} F_{A_0}
$$
for a canonical (up to gauge equivalence) connection $A_0$ on $K_X^{-1}$.
According to \cite{T}, $c_1(K_X)$ is a Seiberg-Witten basic class, hence
for any $r>0$, there is a solution $(\psi, A)$ with
$\psi=\sqrt{r}(\alpha,\beta)\in \Gamma(K_X\oplus\I)$. Moreover, as
$r\rightarrow\infty$, the zero set $\alpha^{-1}(0)\subset X$ converges
pointwise to a set of finitely many $J$-holomorphic curves with multiplicity,
which represents the Poincar\'{e} dual of $c_1(K_X)$. Since $X$ is minimally exotic 
by our assumption, $c_1(K_X)=0$, and as a consequence $\alpha^{-1}(0)$ must be empty
for sufficiently large $r>0$. It follows that, when $r>0$ is sufficiently large, there is a 
unique solution $(\sqrt{r}(\alpha_0,0), A)$ (up to gauge equivalence) to the
perturbed Seiberg-Witten equations, where $a_0\equiv \frac{1}{2}(A-A_0)$
is a flat connection on $K_X$, $|\alpha_0|=1$ and $\nabla_{a_0}\alpha_0=0$
(cf. Lemma 4.5 and the proof of Proposition 4.4 in Taubes \cite{T}).

With the preceding understood, we note that since the family of
perturbed Seiberg-Witten equations under consideration is $G$-equivariant
($A_0$ may be chosen such that $g^\ast A_0=A_0$, $\forall g\in G$),
the uniqueness of  $(\alpha_0,a_0)$ up to gauge equivalence implies
that for any $g\in G$, $g^\ast\alpha_0=\phi(g)\alpha_0$ for some smooth
circle-valued function $\phi(g):X\rightarrow \s^1$. Since $g$ is of a finite
order, $\phi(g)$ must be a constant function because $\phi(g)^{|g|}=1$.
This gives rise to a homomorphism $\rho:G\rightarrow \s^1$ which is defined
by $\rho: g\mapsto \phi(g)\in\s^1$. We define $G_0\subset G$ to be the kernel
of $\rho$ and set $G^0\equiv G/G_0$. Then clearly $G^0$ is cyclic. Moreover,
if $g\in G$ has the property that $b_2^{+}(X/g)=3$ then, as we argued in
\cite{CK1}, the corresponding $g$-equivariant Seiberg-Witten invariant
is nonzero, which implies $\phi(g)=1$ and hence $g\in G_0$. Finally,
we observe that for any $g\in G_0$, since $\alpha_0$ is a nowhere vanishing
section of $K_X$ and $g^\ast\alpha_0=\alpha_0$,
$g$ has at most isolated fixed points with a local representation
contained in $SL_2(\C)$.

It remains to show that the quotient orbifold $X/G_0$ can be smoothly
resolved into a minimally exotic symplectic homotopy $K3$ surface. Note that
this automatically implies $b_2^{+}(X/G_0)=3$ as it equals the $b_2^{+}$
of the smooth resolution. In fact, in the next lemma we will prove an
equivariant version of it. To finish the proof of the proposition, one simply
uses the lemma with $H=K=G_0$. 

\end{proof}

Consider a subgroup $K$ of $G$ which is contained in $G_0=\ker \rho$
where $\rho:g\mapsto \phi(g)$, i.e., for any $g\in K$, $g^\ast\alpha_0=
\alpha_0$. Let $H$ be a normal subgroup of $K$.

\begin{lemma}
There exists a minimally exotic symplectic homotopy $K3$ surface $X_H$ which
is a smooth resolution of the orbifold $X/H$, such that $K/H$ acts on
$X_H$ symplectically, extending the natural $K/H$-action on $X/H$
under the resolution $X_H\rightarrow X/H$. Moreover, note that
$b_2^{+}(X_H/(K/H))=b_2^{+}(X/K)=b_2^{+}(X_K)=3$.
\end{lemma}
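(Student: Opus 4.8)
The plan is to construct $X_H$ as the symplectic minimal resolution of the orbifold $X/H$ and to identify it as ``standard'' by Li's characterization quoted in the Introduction, namely that a simply connected, minimal symplectic $4$-manifold of zero Kodaira dimension is standard. First I would record the local structure of $X/H$. Since $H\subset G_0=\ker\rho$, Proposition 3.2 guarantees that every $g\in H$ acts pseudofreely with isolated fixed points whose local representations lie in $SL_2(\C)$; hence each stabilizer $\Gamma_p\subset H$ of a singular point $p$ acts on a neighborhood as a finite subgroup of $SL_2(\C)$, so the corresponding singularity of $X/H$ is a du Val (rational double point) singularity $\C^2/\Gamma_p$. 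Moreover the $H$-invariant, nowhere vanishing section $\alpha_0$ of $K_X$ descends to a nowhere vanishing section of the orbifold canonical bundle of $X/H$, so $X/H$ is a symplectic orbifold with trivial orbifold canonical class.

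Second, I would perform the resolution. Each du Val point is replaced by the minimal resolution of $\C^2/\Gamma_p$, whose exceptional set is a configuration of $(-2)$-spheres in the corresponding $ADE$ pattern; since this resolution is crepant it carries an ALE K\"ahler form asymptotic to the flat cone, which may be glued to the symplectic form descended from $\omega$ by a cut-off of K\"ahler potentials near each $p$. The resulting $X_H$ is symplectic with $c_1(K_{X_H})=0$, because the resolution is crepant and $K_{X/H}$ is trivial, and it is minimal, since a crepant configuration of $(-2)$-spheres contains no symplectic $(-1)$-sphere (such a sphere $E$ would satisfy $c_1(K_{X_H})\cdot E=-1\neq 0$). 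For equivariance I would use that $H\triangleleft K$, so $K/H$ acts on $X/H$ by symplectomorphisms permuting the singular points within each type; the minimal resolution of a du Val singularity is canonical, so the action lifts to $X_H$, and performing the gluing simultaneously on each $K/H$-orbit together with averaging the glued form over $K/H$ makes $X_H$ a symplectic $K/H$-manifold extending the given action on $X/H$.

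It then remains to see that $X_H$ is simply connected, after which Li's characterization identifies it as a standard homotopy $K3$. Writing $X^\ast=X\setminus\text{Fix}(H)$, the set $X^\ast$ is simply connected (we delete a finite set from a simply connected $4$-manifold) and carries a free $H$-action, so $\pi_1(X^\ast/H)=H$; filling in the resolutions kills exactly the images of the local fundamental groups $\pi_1(\s^3/\Gamma_p)=\Gamma_p$, whence by van Kampen (Armstrong's theorem) $\pi_1(X_H)=H/N$, where $N$ is the normal subgroup generated by all stabilizers. The Betti-number chain is then immediate: $X_H/(K/H)$ coincides with $X/K$ as an orbifold, and a minimal resolution alters only negative-definite classes, so $b_2^{+}(X_H/(K/H))=b_2^{+}(X/K)=b_2^{+}(X_K)=3$, the last equality being the non-equivariant statement applied to $K$.

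The main obstacle is precisely the vanishing of $H/N$, i.e.\ that the elements of $H$ with fixed points generate $H$: this is what distinguishes $X_H$ from a free quotient such as an Enriques surface. I expect to establish it by computing $\chi(X_H)$ and $\sigma(X_H)$ from the Lefschetz and $G$-signature theorems applied to the $SL_2(\C)$ fixed-point data, thereby showing $b_2(X_H)=22$ and $\sigma(X_H)=-16$; this excludes the Enriques possibility ($H/N=\Z_2$, $b_2=10$) and, together with $b_1(X_H)=0$, forces $X_H$ to be simply connected, completing the identification with a standard homotopy $K3$.
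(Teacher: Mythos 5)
Your construction of $X_H$ and the equivariance argument are fine in substance: descending the nowhere-vanishing section $\alpha_0$, identifying the singularities as Du Val, resolving by the (canonical, hence $K/H$-equivariant) minimal resolution, and keeping the canonical class trivial by crepancy all match the paper, which merely packages the gluing differently (McCarthy--Wolfson's contact-cylinder gluing via an equivariant Moser argument rather than ALE K\"ahler cut-offs; note in passing that averaging a glued symplectic form over $K/H$ does not obviously preserve nondegeneracy, so it is cleaner to make every local choice equivariant from the start, as the paper does).

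The genuine gap is in your last step, and it is twofold. First, you propose to compute $\chi(X_H)$ and $\sigma(X_H)$ from the Lefschetz and $G$-signature theorems ``applied to the $SL_2(\C)$ fixed-point data'' --- but at this stage the fixed-point data (the number of fixed points of each $g\in H$, indeed whether $\text{Fix}(g)\neq\emptyset$ at all) is unknown; in both theorems the fixed-point counts and the traces are simultaneous unknowns, and here $H$ is an arbitrary finite subgroup of $G_0$, not one of the groups analyzed in Section 2. Determining this data is exactly the ``lengthy argument involving essentially the various $G$-index theorems'' that Remark 3.3 defers, and in the paper's logic the fixed-point structure is a \emph{consequence} of Lemma 3.2 (via Nikulin's method), not an input --- so your route is circular. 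The paper instead pins down $b_2(X_H)=22$, $\sigma(X_H)=-16$ with no fixed-point information: triviality of $K_{X_H}$ makes $X_H$ spin, Rohlin gives $16\mid\sigma$, the even indefinite intersection form is $m\left(\begin{smallmatrix}0&1\\1&0\end{smallmatrix}\right)\oplus 2k(\pm E_8)$, and $0=c_1^2(K_{X_H})=2\chi+3\,\text{sign}$ together with $b_1=0$ and $m=b_2^{+}(X/H)\in\{1,3\}$ (the parity argument of Lemma 2.1) forces $m=3$, $k=1$. Second, even granting $b_2=22$, $\sigma=-16$, $b_1=0$, simple connectivity does \emph{not} follow: your Armstrong computation gives $\pi_1(X_H)=H/N$, and $b_1=0$ only excludes quotients with nontrivial abelianization; a nontrivial perfect $H/N$ of order $d$ is consistent with all your numerics, since the universal cover would be symplectic with trivial canonical class, $\chi=24d$, $\sigma=-16d$, and neither $2\chi+3\sigma=0$ nor Furuta's $10/8$ inequality ($24d-2\geq 20d+2$) rules out $d>1$. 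To kill $\pi_1$ one genuinely needs Li's Betti-number bounds $b_2^{+}\leq 3$, $b_2^{-}\leq 19$ for a simply connected symplectic $4$-manifold with trivial canonical class, applied to the universal cover (forcing $\chi\leq 24$, hence $d=1$), which is what the paper does; invoking Li's ``Kodaira dimension zero'' characterization of standardness earlier in your argument does not help here, because that characterization presupposes simple connectivity --- the very point at issue.
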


\begin{proof}
The construction of the smooth resolution of the symplectic orbifold
$X/H$ was given by McCarthy and Wolfson in \cite{McW}. We shall briefly
review the procedure, indicating that it can be done equivariantly.
In fact the construction is local, so we shall be focusing on a neighborhood
of an isolated singular point of the orbifold, which by the equivariant
Darboux' theorem is modeled on $\C^2/\Gamma$, where $\Gamma$ is the
isotropy group at the singular point which acts complex linearly on $\C^2$,
and where the symplectic structure $\omega_0$ on $\C^2/\Gamma$ is given 
by the standard one on $\C^2$, $\omega_{std}=i(dz_1\wedge d\bar{z}_1+dz_2\wedge d\bar{z}_2)$.

Let $U, V$ be the part of $\C^2/\Gamma$ which lies outside and inside of
the unit ball over $\Gamma$ respectively, and let $W=\partial U=\partial V$
which is the $3$-manifold $\s^3/\Gamma$. Since $V$ is an algebraic surface
with an isolated singularity, there is a nonsingular, minimal projective
resolution $\pi:Y\rightarrow V$. Note that $Y$ is K\"{a}hler. We let
$\tau$ be a K\"{a}hler form on $Y$. Then for any $\epsilon>0$,
$\omega_\epsilon\equiv \pi^\ast\omega_0+\epsilon \tau$ is a K\"{a}hler 
form on $Y$. We shall show that for a sufficiently small $\epsilon>0$,
the two pieces $(U,\omega_0)$ and $(Y,\omega_\epsilon)$ can be symplectically
``glued'' together, which gives a smooth resolution of $\C^2/\Gamma$ by
a symplectic manifold.

To this end, we consider the contact structure $\xi$ on $W$ which is the
distribution of complex lines in $TW$. Note that $\omega_0|_W=d\alpha$ for
some contact form $\alpha$ such that $\xi=\ker\alpha$. On the other hand,
since $W$ is a rational homology $3$-sphere, $\tau|_W=d\beta$ for a $1$-form
$\beta$, and hence $\omega_\epsilon|_W=d\alpha_\epsilon$ where
$\alpha_\epsilon\equiv \alpha+\epsilon\beta$ is also a contact form when
$\epsilon>0$ is sufficiently small. By Moser's argument, there exists a
self-diffeomorphism $\psi:W\rightarrow W$ such that $\psi^\ast\alpha_\epsilon
=e^f \alpha$ for some smooth function $f:W\rightarrow \R$. Pick a constant
$C>0$ such that $f<C$ on $W$. Let $Z\subset (\R\times W, d(e^t\alpha))$ be
the symplectic ``cylinder'' defined by
$$
Z\equiv \{(t,x)|x\in W, f(x)-C\leq t\leq 0\}.
$$
Then the smooth resolution of $\C^2/\Gamma$ by a symplectic manifold is
given by
$$
(X_{\epsilon,C},\omega)\equiv
(U,\omega_0)\cup (Z, d(e^t\alpha))\cup (Y, e^{-C}\omega_\epsilon),
$$
where the gluing between $\partial U=W$ and the component of $\partial Z$
defined by $t=0$ is by the identity map on $W$, and the gluing between the
component of $\partial Z$ defined by $t=f(x)-C$ and $\partial Y=W$ is by
$(t,x)\mapsto \psi(x)$, where $\psi:W\rightarrow W$ is the self-diffeomorphism
obtained above through Moser's argument. We leave it to the reader to follow
through that if a finite group $\Gamma^\prime$ acts complex linearly on
$\C^2/\Gamma$, then there is a corresponding symplectic
$\Gamma^\prime$-action on the smooth resolution $(X_{\epsilon,C},\omega)$.
(We remark that Moser's argument can be done equivariantly in the presence
of a compact Lie group action; in particular, the self-diffeomorphism
$\psi$ of
$W$ can be made equivariant with respect to the $\Gamma^\prime$-action on
$W$, so that the gluing by $(t,x)\mapsto \psi(x)$ in the construction
of $X_{\epsilon,C}$ is also equivariant.)

It remains to show that $X_H$ is a minimally exotic symplectic homotopy $K3$
surface, and that $b_2^{+}(X_H/(K/H))=3$. The key step is the observation
that $X_H$ has a trivial canonical bundle. To see this, note that for any
$g\in H$, since $g^\ast\alpha_0=\alpha_0$, the nonzero section $\alpha_0$
descends to a nonzero section $\hat{\alpha_0}$ of the canonical bundle
of the symplectic orbifold $X/H$. With this understood it suffices to show that 
the canonical bundle of $(X_{\epsilon,C},\omega)$ is trivial, which is done by 
matching up the trivialization of the canonical bundle on the three pieces 
$(U,\omega_0)$, $(Z, d(e^t\alpha))$  and $(Y, e^{-C}\omega_\epsilon)$. 

On $(U,\omega_0)$, the canonical bundle
$K_U$ is trivialized by $\hat{\alpha_0}$.
On $(Z, d(e^t\alpha))$, the canonical bundle is the pull back of $\xi^{-1}$,
the inverse line bundle of the contact structure $\xi$, via the projection
$Z\rightarrow W$. Since $K_U|_W=\xi^{-1}$ and $K_U$ is trivial, we see
that $K_Z$ is also trivial. Finally, $K_Y$ is also
trivial, because the symplectic form $e^{-C}\omega_\epsilon$ on $Y$ is
K\"{a}hler so that $K_Y$ is simply given by the holomorphic canonical
bundle. Since for each $g\in H$ the local representation at each
fixed point of $g$ is contained in $SL_2(\C)$, the singularity of
$\C^2/\Gamma$ is a Du Val singularity, and it is known that in this case
$Y$ has a trivial canonical bundle if it is taken minimal. Now since
$H^1(W;\Z)=0$ which parametrizes the homotopy classes of the non-zero
sections of the trivial line bundle over $W$, one can matches up the
trivialization of $K_U$, $K_Z$ and $K_Y$ to obtain a trivialization for
the canonical bundle of $(X_{\epsilon,C},\omega)$.

As an immediate consequence, $X_H$ is spin as $w_2(TX_H)=c_1(K_{X_H})\pmod{2}$
must vanish. By Rohlin's theorem, $\text{sign}(X_H)$ is divisible by $16$.
Hence the intersection form on $H_2(X_H;\Z)/Tor$ is given by
$m \left(\begin{array}{cc}
0 & 1\\
1 & 0\\
\end{array} \right )
\oplus 2k (\pm E_8)$, with $m=b_2^{+}(X_H)$ and $k=|\text{sign}(X_H)|/16$.
Now observe that the fundamental group of $X_H$ is finite, which
implies that $b_1(X_H)=0$. Hence
$$
0=c_1^2(K_{X_H})=(2\chi+3\text{sign})(X_H)=2(2+2m+16k)\pm 3\cdot 16k=0.
$$
Since $m=b_2^{+}(X_H)=b_2^{+}(X/H)=1$ or $3$ (cf. Lemma 2.1), the only
solution for $(m,k)$ from the above equation is $m=3$ and $k=1$, and
moreover, $\text{sign}(X_H)=-16$. This shows that $X_H$ is a rational
homology $K3$ surface. (Note that this conclusion also follows directly
from Furuta's work on the $\frac{11}{8}$-conjecture, cf. \cite{Fu}.)

Next we show that $\pi_1(X_H)$ is trivial. Let $\widehat{X_H}$ be the
universal cover of $X_H$, which is compact because $\pi_1(X_H)$ is finite.
Then $\widehat{X_H}$ is a closed, simply-connected symplectic $4$-manifold
with trivial canonical bundle. It was shown by Morgan and Szab\'{o} \cite{MS}
(compare also \cite{Li}) that the Betti numbers of $\widehat{X_H}$ satisfy
$$
b_2^{+}(\widehat{X_H})=3 \mbox{ and } b_2^{-}(\widehat{X_H})=19.
$$
With $b_2^{+}(X_H)=3$ and $b_2^{-}(X_H)=19$ it follows easily that
$\pi_1(X_H)$ is trivial. This completes the proof
that $X_H$ is a minimally exotic symplectic homotopy $K3$ surface.

Finally, we observe that
$$
b_2^{+}(X_H/(K/H))=b_2^{+}((X/H)/(K/H))=b_2^{+}(X/K)=b_2^{+}(X_K)=3.
$$
\end{proof}

\begin{remark}
The holomorphic version of Lemma 3.2 has been used in a fundamental way,
first by Nikulin in \cite{N} and then by Xiao in \cite{Xiao}, to study
finite symplectic automorphism groups of $K3$ surfaces. In particular,
following the argument in Nikulin \cite{N}, one can show, with Lemma 3.2,
that for any $g\in G_0$, the order $|g|\leq 8$ and the number of fixed
points of $g$ is the same as that of an order $|g|$ symplectic automorphism
of a $K3$ surface. However, we would like to point out that this statement
can also be proved directly, by a lengthy argument involving essentially
the various $G$-index theorems. Even though we have no need to pursue it
here, we would like to observe that $\text{Fix}(g)\neq\emptyset$
directly implies that the smooth resolution $X_H$ in Lemma 3.2 is
simply-connected, without appealing to the result of Morgan and Szab\'{o}
in \cite{MS} as we did in the proof of Lemma 3.2.
\end{remark}

Now with Lemma 3.2 in place, we shall follow through the arguments of Xiao
in \cite{Xiao} to complete the proof of Theorem 1.1 by showing that $G_0$
is a symplectic $K3$ group and that the action of $G_0$ on $X$ has the same
fixed point set structure as does a corresponding symplectic automorphism
group of a $K3$ surface.

In Section $1$ of Xiao \cite{Xiao}, the only argument involving complex
geometry is in the proof of Lemma $2$ there. We shall give
a pure algebraic topology proof of this result below. In order to state
the lemma, we first need to introduce the necessary notations.

Let $X$ be a minimally exotic symplectic homotopy $K3$ surface and let $G$ be
a finite group acting effectively on $X$ via symplectic symmetries such that
$b_2^{+}(X/G)=3$. Then as we have shown, $X/G$ is a symplectic orbifold
of only Du Val singularities, which has a smooth resolution $X_G$ as
defined in Lemma 3.2. Let $L^\prime$ be the sublattice of $H_2(X_G;\Z)$
generated by the $(-2)$-spheres in $X_G$ which are sent to the singular
points under $X_G\rightarrow X/G$, and let $L$ be the smallest primitive
sublattice of $H_2(X_G;\Z)$ containing $L^\prime$. Then the analog of Lemma
$2$ in Xiao \cite{Xiao} is contained in the following lemma.

\begin{lemma}
$L/L^\prime\cong G/[G,G]$.
\end{lemma}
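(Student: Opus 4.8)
The plan is to compute both sides of the claimed isomorphism as homological invariants of the resolution $X_G$ and to match them through a Mayer--Vietoris argument combined with lattice duality. Write $X_0=X\setminus\mathrm{Fix}(G)$ and $Y_0=X_0/G$. Since $X$ is simply connected and, by Proposition 3.1, the $G$-action (here $G=G_0$, as $b_2^{+}(X/G)=3$) is pseudofree with isolated fixed points of isotropy in $SL_2(\C)$, deleting the finitely many fixed points leaves $X_0$ simply connected; hence $X_0\to Y_0$ is the universal cover and $\pi_1(Y_0)=G$. Each singular point $p$ of $X/G$ is the image of a $G$-orbit of fixed points with finite isotropy $\Gamma_p\subset SL_2(\C)$, conjugate into $SU(2)$, so its link is the spherical space form $\s^3/\Gamma_p$, the germ $\C^2/\Gamma_p$ is a Du Val singularity, and in $X_G\to X/G$ it is replaced by an ADE configuration $E_p$ of $(-2)$-spheres whose intersection lattice is the corresponding root lattice $L'_p$, with $L'=\bigoplus_p L'_p$.

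First I would decompose $X_G=N\cup V$, where $N$ is a regular neighborhood of the exceptional set $E=\bigcup_p E_p$ (so $N\simeq E$ and $\partial N=\bigsqcup_p \s^3/\Gamma_p$) and $V=X_G\setminus\mathrm{int}(N)\simeq Y_0$. Feeding into the Mayer--Vietoris sequence the facts $H_1(X_G)=0$ (as $X_G$ is a homotopy $K3$), $H_1(N)=H_1(E)=0$ (the $E_p$ are trees of spheres), $H_2(\partial N)=0$ (each $\s^3/\Gamma_p$ is a rational homology sphere), $H_2(N)=H_2(E)=L'$, and $H_1(V)=G/[G,G]$, the sequence collapses to
$$
H_2(X_G)\xrightarrow{\ \partial\ }\bigoplus_p H_1(\s^3/\Gamma_p)\xrightarrow{\ j_\ast\ }G/[G,G]\to 0,
$$
with $j_\ast$ surjective precisely because $H_1(X_G)=0$. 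Thus $G/[G,G]\cong\mathrm{coker}(\partial)$, and the problem reduces to identifying the image of the connecting homomorphism.

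The remaining identifications are lattice-theoretic. For each $p$ one has the classical isomorphism $H_1(\s^3/\Gamma_p)\cong\Gamma_p/[\Gamma_p,\Gamma_p]\cong (L'_p)^{\ast}/L'_p$, the discriminant group of the ADE root lattice, so summing over $p$ identifies $\bigoplus_p H_1(\s^3/\Gamma_p)$ with $(L')^{\ast}/L'$. Under this identification $\partial$ becomes the linking map $\phi\colon x\mapsto \langle x,\cdot\rangle|_{L'}\bmod L'$. Since $H_2(X_G)$ is unimodular and $L$ is the primitive closure of $L'$, the orthogonal projection of $H_2(X_G)$ onto $L'\otimes\Q=L\otimes\Q$ is exactly $L^{\ast}$, whence $\mathrm{im}(\partial)=L^{\ast}/L'$ and
$$
G/[G,G]\cong (L')^{\ast}/L^{\ast}.
$$
Finally, inside the nondegenerate discriminant form on $(L')^{\ast}/L'$ one checks $L^{\ast}/L'=(L/L')^{\perp}$, so duality gives $(L')^{\ast}/L^{\ast}\cong\mathrm{Hom}(L/L',\Q/\Z)\cong L/L'$, which is the desired conclusion.

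The main obstacle I anticipate is the precise identification of the Mayer--Vietoris connecting homomorphism $\partial$ with the linking/discriminant pairing $\phi$, and its compatibility with the summandwise identifications $H_1(\s^3/\Gamma_p)\cong (L'_p)^{\ast}/L'_p$. While this is standard for plumbed neighborhoods bounded by rational homology spheres, carrying it out with correct signs and orientations is where the genuine work lies. The other ingredients---simple-connectivity of $X_0$ and $X_G$, the Du Val/ADE structure of the singularities, and the elementary overlattice duality for a unimodular form---are either furnished by Proposition 3.1 and Lemma 3.2 or are routine.
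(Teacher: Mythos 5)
Your proposal is correct, but it reaches the isomorphism by a genuinely different homological route than the paper, even though both start from the same decomposition of $X_G$ into a regular neighborhood $N$ (the paper's $A$) of the exceptional spheres and its complement $V=B$ with $\pi_1(V)=G$. Where you run Mayer--Vietoris, identify $H_1(\partial N)\cong\bigoplus_p\Gamma_p^{ab}\cong (L')^\ast/L'$, identify the connecting map with the linking/restriction map so that $\mathrm{im}(\partial)=L^\ast/L'$ (using unimodularity of $H_2(X_G;\Z)$ and primitivity of $L$), and finish with the overlattice duality $(L')^\ast/L^\ast\cong\mathrm{Ext}(L/L',\Z)\cong L/L'$, the paper instead takes the long exact sequence of the pair $(X_G,A)$, rewrites $H_2(X_G,A;\Z)\cong H^2(B;\Z)$ by excision and Poincar\'e--Lefschetz duality, and applies the universal coefficient theorem to $H^2(B;\Z)$: the composite $H_2(X_G;\Z)\stackrel{j_\ast}{\rightarrow}H^2(B;\Z)\stackrel{h}{\rightarrow}\mathrm{Hom}(H_2(B;\Z),\Z)$ kills exactly $L$, so $j_\ast$ restricts to a surjection $L\rightarrow\mathrm{Ext}(H_1(B;\Z),\Z)$ with kernel $L'$, yielding $L/L'\cong\mathrm{Ext}(G/[G,G],\Z)\cong G/[G,G]$ in one stroke---no discriminant groups, no binary polyhedral groups, no linking-form conventions. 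Concerning the step you flag as the main obstacle: it is indeed standard, and in fact harmless here, because you only need the \emph{image} of $\partial$, not $\partial$ itself with correct signs; by naturality the Mayer--Vietoris boundary factors through $H_2(X_G;\Z)\rightarrow H_2(X_G,V;\Z)\cong H_2(N,\partial N;\Z)\cong H^2(N;\Z)\cong\mathrm{Hom}(L',\Z)$ followed by the quotient onto $H_1(\partial N;\Z)$, and a cokernel is insensitive to sign and orientation conventions; likewise you never need the group-theoretic isomorphism $\Gamma_p^{ab}\cong (L'_p)^\ast/L'_p$, only the topological fact that $H_1(\s^3/\Gamma_p;\Z)$ is the cokernel of the intersection matrix of the plumbing. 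What your longer route buys is extra structure---the explicit placement of $G/[G,G]$ inside the discriminant group of $L'$, the kind of lattice data Nikulin and Xiao exploit---while the paper's version buys brevity and complete independence from orientation bookkeeping.
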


\begin{proof}
Let $A$ be a regular neighborhood of the $(-2)$-spheres in $X_G$ which
are mapped to the singular points under $X_G\rightarrow X/G$, and let
$B=X_G\setminus A$ be the complement of $A$. Then the long exact sequence
associated to the pair $(X_G,A)$ gives rise to
$$
H_2(A;\Z)\stackrel{i_\ast}{\rightarrow} H_2(X_G;\Z)
\stackrel{j_\ast}{\rightarrow} H^2(B;\Z)\rightarrow 0,
$$
where we have used the excision and Poincar\'{e} duality to make the
identification $H_2(X_G,A;\Z)\cong H_2(B,\partial B;\Z)\cong H^2(B;\Z)$,
and we have used the fact that $A$ is simply-connected so that
$H_1(A;\Z)=0$. On the other hand, by the universal-coefficient theorem
for cohomology, we have the short exact sequence
$$
0\rightarrow \text{Ext}(H_1(B;\Z),\Z)\rightarrow H^2(B;\Z)
\stackrel{h}{\rightarrow} \text{Hom}(H_2(B;\Z),\Z)\rightarrow 0.
$$
Now observe that for any element $x\in H_2(X_G;\Z)$, $h\circ j_\ast(x)=0$
if and only if the intersection product of $x$ with any element $y\in
H_2(B;\Z)$ is zero, which is precisely if and only if $x\in L$. This
gives a surjective homomorphism
$j_\ast: L\rightarrow \text{Ext}(H_1(B;\Z),\Z)$ whose kernel is easily seen
to be $L^\prime=\text{Im}(i_\ast)$. Hence
$L/L^\prime\cong  \text{Ext}(H_1(B;\Z),\Z)$.

Finally, $\pi_1(B)=G$ so that $H_1(B;\Z)=G/[G,G]$ is a torsion group.
This gives
$$
L/L^\prime\cong \text{Ext}(H_1(B;\Z),\Z)\cong H_1(B;\Z)=G/[G,G].
$$
\end{proof}

In Section $2$ of \cite{Xiao}, Xiao formulated a set of criteria
obtained from Section $1$, and by a computer search a list of
possibilities for a symplectic $K3$ group as well as the combinatorial
types of the actions were generated. A few of the cases were further
eliminated to reach the final list, where the arguments are those
in \cite{Xiao} which precedes Lemma $5$. We observe that this procedure 
can be used in the present situation verbatim. This finishes the proof of 
Theorem 1.1.

\begin{remark}
The holomorphic version of Theorem 1.1 is contained in Nikulin \cite{N}.
There it was also shown that the order of the cyclic group $G^0$ is bounded
by $66$ (which is a sharp bound). The proof of this result involves
arguments in complex geometry which are not available in the present,
symplectic category. However, we should point out that there are further
informations contained in the proof of Proposition 3.1 which can be
used to analyze $G^0$; in particular, it is very likely
that $|G^0|$ has a universal upper bound. We shall not pursue this issue
here, but wish to point out that because of the homological rigidity of
symplectic symmetries of a minimally exotic symplectic homotopy $K3$ surface
established in \cite{CK1}, the prime factors in $|G^0|$ are bounded by
$b_2=22$.

\end{remark}

\section{The Lattice $L_X$ and Proof of Theorem 1.2}

Recall that the Seiberg-Witten invariant of a simply-connected, closed,
oriented, smooth $4$-manifold $M$ with $b_2^{+}\geq 2$ is a map
$$
SW_M: \{\beta\in H^2(M;\Z)|\beta\equiv w_2(TM)\pmod{2}\}\rightarrow \Z.
$$
A class $\beta$ is called a (Seiberg-Witten) basic class if
$SW_M(\beta)\neq 0$. It is a fundamental fact that the set of basic classes
is finite. Moreover, if $\beta$ is a basic class, then so is $-\beta$ with
$$
SW_M(-\beta)=(-1)^{(\chi+\text{sign})(M)/4} SW_M(\beta).
$$
When $M$ is symplectic, a fundamental result of Taubes says that the canonical
class $c_1(K_X)$ associated to a symplectic structure is always a basic class.
The Seiberg-Witten invariant $SW_M$ is an invariant of the diffeomorphism
class of $M$, whose sign depends on a choice of an orientation of
$$
H^0(M;\R)\otimes \det H^{2,+}(M;\R).
$$
In particular, the set of basic classes depends
only on the diffeomorphism type of $M$. When $M$ is a homotopy $K3$ surface,
a theorem of Morgan and Szab\'{o} \cite{MS} says that $\beta=0$ is always a
basic class. Furthermore, when $M$ is symplectic, work of
Taubes \cite{T} gives additional information about the Seiberg-Witten
invariant, in particular, about the set of basic classes.

Let $X$ be a symplectic homotopy $K3$ surface. We set
$$
L_X\equiv \text{Span}(\beta\in H^2(X;\Z)|SW_X(\beta)\neq 0)\subset H^2(X;\Z),
$$
and set $r_X\equiv \text{rank}(L_X)$. Let $\omega$ be any symplectic
structure on $X$, and let $K_X$ be the associated canonical bundle. Then
Taubes \cite{T} showed that $c_1(K_X)\in L_X$ and
$0\leq \beta\cdot [\omega]\leq c_1(K_X)\cdot [\omega]$ for any basic
class $\beta$. In particular, $c_1(K_X)=0$ iff $r_X=0$.

\begin{proposition}
Let $X$ be a symplectic homotopy $K3$ surface. Then the lattice of basic
classes $L_X$ is isotropic, i.e., for any $x,y\in L_X$, the cup product
of $x$ and $y$ is zero. As a consequence, the rank of $L_X$ is bounded
from above by $\min(b_2^{+},b_2^{-})=3$, i.e., $r_X\leq 3$.
\end{proposition}

\begin{proof}
Let $\omega$ be a symplectic structure of $X$, and let $K_X$ be the canonical
bundle. Since $X$ is minimal, and $c_1^2(K_X)=2\chi(X)+3\text{sign}(X)=0$,
a theorem of Taubes (cf. \cite{T}, Theorem 0.2 (5)) says that for any basic
class $\beta$, $e_\beta\equiv \frac{1}{2}(c_1(K_X)+\beta)\in H^2(X;\Z)$ is
Poincar\'{e} dual to $\sum_i m_i T_i$, where $m_i>0$ and $\{T_i\}$ is a
finite set of disjoint, symplectically embedded tori with self-intersection
$0$.

To see $L_X$ is isotropic, it suffices to show that for any basic classes
$\beta,\beta^\prime$, the cup product $\beta\cdot \beta^\prime=0$, which
follows from the generalized adjunction formula as follows. Suppose
$e_\beta=\sum_i m_i T_i$ where $\{T_i\}$ is a finite set of disjoint,
symplectically embedded tori with self-intersection $0$. Then for any
basic class $\beta^\prime$, we apply the generalized adjunction formula 
to $T_i$, 
$$
\text{genus}(T_i)\geq 1+\frac{1}{2}(|\beta^\prime \cdot T_i|+T_i^2).
$$
This implies, for each $i$, $\beta^\prime\cdot T_i=0$ because
$\text{genus}(T_i)=1$ and $T_i^2=0$, and consequently,
$e_\beta\cdot \beta^\prime=0$.
In particular, since $c_1(K_X)$ is a basic class, we have $e_\beta\cdot
c_1(K_X)=0$, which implies that $\beta\cdot c_1(K_X)=0$ for any basic
class $\beta$. (This is because $e_\beta\equiv \frac{1}{2}(c_1(K_X)+\beta)$
and $c_1^2(K_X)=0$.)
Now we go back to $e_\beta\cdot \beta^\prime=0$, and
conclude that
$$
\beta\cdot\beta^\prime=2e_\beta\cdot\beta^\prime-c_1(K_X)\cdot\beta^\prime=0.
$$

Finally, we point out that $r_X\leq 3$ follows directly from the fact that
the projection of $L_X$ into $H^{2,+}(X;\Z)$ is injective (because $L_X$ is
isotropic).
\end{proof}

\begin{remark}
Suppose $G$ is a finite group which acts on a symplectic homotopy $K3$ surface
$X$ via symplectic symmetries. Then there is an induced action of $G$ on the
set of basic classes, which can be extended to a linear action on the lattice
$L_X$. Moreover, let $\omega$ be the symplectic structure which is preserved
under the action of $G$, and let $K_X$ be the associated canonical bundle.
Then $c_1(K_X)\in L_X$ is fixed under the action of $G$, and since $\omega$
is also fixed, the function $\omega: L_X\rightarrow \R$ defined by
$x\mapsto [\omega]\cdot x$ is $G$-invariant. On the other hand, since
the action of $G$ on $H^0(X;\R)\otimes \det H^{2,+}(X;\R)$ is
orientation-preserving (cf. Lemma 2.1), one has, for any basic class $\beta$,
$$
SW_X(g\cdot\beta)=SW_X(\beta), \;\;\forall g\in G.
$$
It is clear that the induced action of $G$ on $L_X$ may be exploited to
relate the action of $G$ on $X$ and the underlying smooth structure of $X$.

\end{remark}

\noindent{\bf Proof of Theorem 1.2}

\vspace{3mm}

Construction of this type of exotic $K3$ surfaces is due to Fintushel
and Stern, which is done by performing the knot surgery on three disjoint,
homologically distinct, symplectically embedded tori in a Kummer surface
(cf. \cite{FS}, compare also \cite{GM}). Our observation here is that it
can be done equivariantly. However, we would like to point out that the
three tori (actually $12$ tori divided into $3$ groups) have to be chosen
differently than in \cite{FS} and \cite{GM} (cf. Remark 4.3).

Consider the $4$-torus $T^4=(\s^1)^4$ with the involution $\rho$, which is
defined in the angular coordinates by
$$
\rho: (\theta_0, \theta_1,\theta_2,\theta_3)\mapsto
(-\theta_0, -\theta_1,-\theta_2,-\theta_3), \mbox{  where }
\theta_j\in \R/2\pi\Z.
$$
There are $16$ isolated fixed points $(\theta_0, \theta_1,\theta_2,\theta_3)$
where each $\theta_j$ takes values in $\{0,\pi\}$. A Kummer surface is
a smooth $4$-manifold which is obtained by replacing each of the singular
points in the quotient $T^4/\langle \rho\rangle$ with an embedded $(-2)$-sphere.
We denote the $4$-manifold by $X_0$.

We shall give a more concrete description of $X_0$ below, where $X_0$ is
also naturally endowed with a symplectic structure.
Consider the symplectic form $\Omega$ on
$T^4$, which is equivariant with respect to the involution $\rho$:
$$
\Omega\equiv \sum_{(i,j,k)}(d\theta_0\wedge d\theta_i+d\theta_j\wedge
d\theta_k)
$$
where the sum is taken over $(i,j,k)=(1,2,3), (2,3,1)$ and $(3,1,2)$.
This gives rise to a symplectic structure on the orbifold $T^4/\langle\rho\rangle$.
One can further symplectically resolve the orbifold singularities to
obtain a symplectic structure on $X_0$ as follows. By the equivariant
Darboux' theorem, the symplectic structure is standard near each
orbifold singularity. In particular, it is modeled on a neighborhood of
the origin in $\C^2/\{\pm 1\}$ and admits a Hamiltonian $\s^1$-action with
moment map $\mu:(w_1,w_2)\mapsto\frac{1}{4}(|w_1|^2+|w_2|^2)$, where $w_1,w_2$
are the standard coordinates on $\C^2$. Fix a sufficiently small $r>0$
and remove $\mu^{-1}([0,r))$ from $T^4/\langle\rho\rangle$ at each of its singular point.
Then $X_0$ is diffeomorphic to the $4$-manifold obtained by collapsing
each orbit of the Hamiltonian $\s^1$-action on the boundaries $\mu^{-1}(r)$,
which is naturally a symplectic $4$-manifold (cf. \cite{Le}). We denote the
symplectic structure on $X_0$ by $\omega_0$.

Let $G=(\Z_2)^3$. We shall next describe a $G$-action on $X_0$ which
preserves the symplectic structure $\omega_0$. Consider first the following
$G$-action on $T^4$:
$$
a\cdot (\theta_0,\theta_1,\theta_2,\theta_3)
=(\theta_0, \theta_1+\pi a_1,\theta_2+\pi a_2,\theta_3+\pi a_3)
$$
where $a=(a_1,a_2,a_3)\in G$ with each $a_j\in\Z_2\equiv\Z/2\Z$. One can
check easily that the above $G$-action commutes with the involution $\rho$,
so that there is an induced $G$-action on the orbifold $T^4/\langle\rho\rangle$. Moreover,
the $G$-action clearly preserves the symplectic form
$$
\Omega\equiv \sum_{(i,j,k)}(d\theta_0\wedge d\theta_i+d\theta_j\wedge
d\theta_k)
$$
on $T^4$, hence it descends to a symplectic $G$-action on $T^4/\langle\rho\rangle$. From
the description of $(X_0,\omega_0)$ given in the previous paragraph, it
follows easily that there is an induced, symplectic $G$-action on
$(X_0,\omega_0)$. (The key point here is that Lerman's symplectic cutting
can be done equivariantly, cf. \cite{Le}.) The $G$-action on $X_0$ is
pseudofree; in fact, for any $0\neq a=(a_1,a_2,a_3)\in G$, a fixed point
of $a$ in $X_0$ is the image of a point in $T^4$ with angular coordinates
$(\theta_0,\theta_1,\theta_2,\theta_3)$, where $\theta_0=0$ or $\pi$,
and for $j=1,2,3$, $\theta_j=0$ or $\pi$ if $a_j=0$ and $\theta_j=\pi/2$
or $3\pi/2$ if $a_j=1$. (Note that each $a\neq 0$ in $G$ has $8$ isolated
fixed points.)

We shall next describe a set of $12$ disjoint, symplectically embedded tori
in $(X_0,\omega_0)$, which is invariant under the $G$-action. The $12$ tori
are divided into $3$ groups, labeled naturally by $j=1,2,3$, where each group
consists of $4$ tori. The group $G$ acts freely and transitively among the $4$ tori
in each group. For simplicity we shall only describe the group of tori indexed by $j=1$
in detail; the others are completely parallel.

Consider the projection $\pi_1$ from $T^4$ to $T^2$ where
$$
\pi_1:(\theta_0,\theta_1,\theta_2,\theta_3)\mapsto (\theta_2,\theta_3).
$$
For any fixed $\delta_{12},\delta_{13}\in \R/2\pi\Z$ other than $0$, $\pi/2$,
$3\pi/2$ and $\pi$, the $4$ tori in $T^4$
\begin{eqnarray*}
T_{1,0}\equiv\pi_1^{-1}(\delta_{12},\delta_{13}) &
T_{1,1}\equiv\pi_1^{-1}(\delta_{12}+\pi,\delta_{13})\\
T_{1,2}\equiv\pi_1^{-1}(\delta_{12},\delta_{13}+\pi) &
T_{1,3}\equiv\pi_1^{-1}(\delta_{12}+\pi,\delta_{13}+\pi)
\end{eqnarray*}
are symplectically embedded with respect to the symplectic form
$$
\Omega\equiv\sum_{(i,j,k)}(d\theta_0\wedge d\theta_i+d\theta_j\wedge
d\theta_k).
$$
Moreover, they descend to $4$ disjoint tori in $T^4/\langle\rho\rangle$, and if the
distance between $\delta_{12}$, $\delta_{13}$ to $0$, $\pi/2$,
$3\pi/2$ and $\pi$ is sufficiently large, $T_{1,k}$, $0\leq k\leq 3$,
can be regarded as tori in $X_0$, which are disjoint and symplectically
embedded. The union $\cup_k T_{1,k}$ is easily seen to be invariant under
the action of $G$ on $X_0$. Moreover,
the action of $G$ on $\cup_k T_{1,k}$ is
transitive, and each $T_{1,k}$ is invariant under an involution of $G$,
which acts on the torus freely via translations.

In the same vein, one can consider projections
$$
\pi_2:(\theta_0,\theta_1,\theta_2,\theta_3)\mapsto (\theta_1,\theta_3)
\mbox{ and }
\pi_3:(\theta_0,\theta_1,\theta_2,\theta_3)\mapsto (\theta_1,\theta_2)
$$
and choose $\delta_{21},\delta_{23},\delta_{31},\delta_{32}\in\R/2\pi\Z
\setminus \{0,\pi/2,\pi,3\pi/2\}$ to obtain $8$ other tori $T_{j,k}$,
where $j=2,3$ and $0\leq k\leq 3$. One can check easily that under
further conditions:
$$
\delta_{13}\neq \pm\delta_{23},\pm(\delta_{23}+\pi),\;
\delta_{12}\neq \pm\delta_{32},\pm(\delta_{32}+\pi),\;
\delta_{21}\neq \pm\delta_{31},\pm(\delta_{31}+\pi)\;
$$
the $12$ tori $T_{j,k}$ in $X_0$ are disjoint.

The exotic $K3$ surfaces are constructed by performing the Fintushel-Stern
knot surgery on each of the $12$ tori $T_{j,k}$ in $X_0$ with a fibered
knot. The key issue here is that the knot surgery needs to be performed
equivariantly with respect to the $G$-action on $X_0$. To this end, we
shall first give a brief review of the knot surgery from \cite{FS}.

Let $M$ be a simply-connected smooth $4$-manifold with $b_2^{+}>1$, and let
$T$ be a c-embedded torus in $M$ (cf. \cite{FS}) such that $\pi_1(M\setminus T)=1$.
Consider a knot $K$ in $\s^3$, and let $m$ denote
a meridional circle to $K$. Let $Y_K$ be the $3$-manifold obtained by
performing $0$-framed surgery on $K$. Then $m$ can also be viewed as a
circle in $Y_K$. In $Y_K\times\s^1$ we have the smoothly embedded torus
$T_m\equiv m\times\s^1$ of self-intersection $0$. Since a neighborhood
of $m$ has a canonical framing in $Y_K$, a neighborhood of the torus $T_m$
in $Y_K\times\s^1$ has a canonical identification with $T_m\times D^2$.
With this understood, the knot surgery on $T$ with knot $K$ is the
smooth $4$-manifold $M_K$, which is the fiber sum
$$
M_K\equiv M\#_{T=T_m}(Y_K\times\s^1)=[M\setminus (T\times D^2)]\cup
[(Y_K\times\s^1)\setminus (T_m\times D^2)].
$$
Here $T\times D^2$ is a tubular neighborhood of the torus $T$ in $M$.
The two pieces are glued together so as to preserve the homology class
$[\text{pt}\times \partial D^2]$. Note that the diffeomorphism type of
the fiber sum is not uniquely determined in general, and the $4$-manifold
$M_K$ is taken to be any manifold constructed in this fashion. A fundamental 
theorem of Fintushel and Stern states that $M_K$ is naturally homeomorphic
to $M$ and the Seiberg-Witten invariants of the two manifolds are related
by 
$$
sw_{M_K}=sw_{M}\cdot \Delta_K(t),
$$ 
where $sw_{M_K}$, $sw_{M}$ are certain Laurent polynomials defined from 
the Seiberg-Witten invariants of $M_K$ and $M$ respectively, and $\Delta_K(t)$ 
is the Alexander polynomial of $K$, with $t=\exp(2[T])$. See \cite{FS} for more details.
We remark that when $M$ is symplectic and $T$ is symplectically embedded,
$M_K$ can be naturally made symplectic by choosing any fibered knot $K$.
Note that when $M$ is the standard $K3$ surface, one has $sw_M=1$, so that
$M_K$ is an exotic $K3$ surface as long as the knot $K$ has a nontrivial
Alexander polynomial.

With the preceding understood, note that in our present situation, each of
the $12$ tori $T_{j,k}$ is invariant under an involution of $G$. Moreover,
the action on the tubular neighborhood $T_{j,k}\times D^2$ projects to
a trivial action on the $D^2$-factor. In order to do the knot surgery
equivariantly, we shall consider the involution on $Y_K\times\s^1$ which is
trivial on the $Y_K$-factor and is by translation on the $\s^1$-factor.
Recall that the only requirement in the knot surgery is to preserve the
homology class $[\text{pt}\times \partial D^2]$ under the gluing. Since
on the $Y_K\times\s^1$ side $\text{pt}\times \partial D^2$ is given by a
$0$-framed copy of the knot $K$ in $Y_K$ and the involution on $Y_K\times
\s^1$ is chosen to be trivial on the $Y_K$-factor, it follows easily that
for any fixed fibered knot $K$, one can do the knot surgery simultaneously
on each of the $12$ tori $T_{j,k}$ with the knot $K$, such that the
$G$-action on $X_0$ can be extended to a symplectic $G$-action on the
resulting $4$-manifold $X_K$. Moreover, $X_K$ continues to be simply connected
as repeated knot surgeries on parallel copies is equivalent to a single knot surgery 
using the connected sum of the knots (cf. Example 1.3 in \cite{C1}). Hence $X_K$ 
is a symplectic homotopy $K3$ surface with 
$$
sw_{X_K}=\Delta_K(t_1)^4\Delta_K(t_2)^4\Delta_K(t_3)^4
$$
where $t_j=\exp(2[T_{j,0}])$. Note that the three tori $T_{1,0}$, $T_{2,0}$
and $T_{3,0}$ are homologically linearly independent, so that $X_K$ is maximally exotic, 
i.e., $r_X=3$. By nature of construction, the $G$-action on $X_K$ is clearly pseudofree 
and induces a trivial action on the lattice $L_X$ of the Seiberg-Witten basic classes.

Finally, one obtains infinitely many distinct $X_K$ by choosing $K$ with distinct genus.

\hfill $\Box$

\begin{remark}
We would like to explain why the tori in our construction have to be
chosen differently than in \cite{FS} or \cite{GM}, and point out
that for the same reason our construction can not be extended to the
symplectic $K3$ group $(\Z_2)^4$. The key point here is that one has
to make sure that each $T_{j,k}$ can only be invariant under a cyclic
subgroup of $G$. Otherwise, we will be forced to introduce a nontrivial
cyclic action on the factor $Y_K$ in $Y_K\times \s^1$. Of course,
one way to obtain such a cyclic action on $Y_K$
is to pick a cyclic action on $\s^3$ under which $K$ is invariant, and
then do the $0$-framed surgery on $K$ equivariantly. The problem is that
the action on the tubular neighborhood $T_{j,k}\times D^2$ projects to
a trivial action on the $D^2$-factor, and since under the knot surgery
$\text{pt}\times \partial D^2$ is glued to a $0$-framed copy of $K$ in
$Y_K$, the action on $\s^3$ which we picked at the beginning has to fix
the knot $K$. However, by the Smith conjecture \cite{MH}
this is not possible unless
$K$ is the unknot. With this understood, we remark that with the choice
of tori as in \cite{FS} or \cite{GM}, one can only construct a
$(\Z_2)^2$-action on a homotopy $K3$ surface with maximal exoticness.
On the other hand, for the group $G=(\Z_2)^4$, our construction would
not even yield an effective $G$-action on a homotopy $K3$ surface
with nontrivial exoticness (i.e. $r_X>0$).

\end{remark}

\noindent{\bf Acknowledgments}: The authors wish to thank an anonymous referee whose
extensive comments have led to a much improved exposition. 

\vspace{3mm}

\vspace{2mm}

{\Small
W. Chen: Department of Math. and Stat., University of Massachusetts,
Amherst, MA 01003.\\
{\it e-mail:} wchen@math.umass.edu

S. Kwasik: Mathematics Department, Tulane University,
New Orleans, LA 70118. \\
{\it e-mail:} kwasik@math.tulane.edu\\
}

\end{document}